\title[Generalizations of Lagrange and Sylow Theorems]{Generalizations of Lagrange and Sylow Theorems for Groupoids}
\author[Beier, Garcia, Lautenschlaeger, Pedrotti and Tamusiunas]{Gustav Beier, Christian Garcia, Wesley G. Lautenschlaeger, \\ Juliana Pedrotti and Thaísa Tamusiunas}
\address{Instituto de Matem\'{a}tica, Universidade Federal do Rio Grande do Sul,  Av. Bento Gon\c{c}alves, 9500, 91509-900. Porto Alegre-RS, Brazil}
\email{gustavbeier@gmail.com}
\email{christian.garcia@ufrgs.br}
\email{wesleyglautenschlaeger@gmail.com}
\email{julianabpedrotti@gmail.com}
\email{thaisa.tamusiunas@gmail.com}
\date{}
\newcounter{contador}
\numberwithin{contador}{section}
\newtheorem{theorem}[contador]{Theorem}
\newtheorem{prop}[contador]{Proposition}
\newtheorem{lemma}[contador]{Lemma}
\newtheorem{corollary}[contador]{Corollary}
\theoremstyle{definition}
\newtheorem{defi}[contador]{Definition}
\newtheorem{exe}[contador]{Example}
\newcommand{\G}{\mathcal{G}}
\newcommand{\cH}{\mathcal{H}}
\newcommand{\cK}{\mathcal{K}}
\newcommand{\cP}{\mathcal{P}}
\newcommand{\cS}{\mathcal{S}}
\newcommand{\cA}{\mathcal{A}}
\begin{document}

\maketitle

\begin{abstract}
    We show a classification method for finite groupoids and discuss the cardinality of cosets and its relation with the index.  We prove a generalization of the Lagrange's Theorem and establish a Sylow theory for groupoids.
\end{abstract}

\vspace{0.5 cm}

\noindent \textbf{2010 AMS Subject Classification:} 20L05 

\noindent \textbf{Keywords:} groupoid, Lagrange's theorem, Sylow theorems

\section{Introduction}

On group theory, Lagrange's Theorem states that, given a finite group $G$, the order of any subgroup $H$ divides the order of $G$. Precisely, it establishes that the number of cosets of $H$ in $G$ is given by the order of $G$ divided by the order of $H$.

A groupoid is usually presented as a small category whose morphisms are invertible, which is a natural extension of the notion of group. Indeed, any group can be seen as a category with a unique object. An algebraic interpretation of groupoids appeared for the first time in \cite{brandt1927verallgemeinerung}, but a generalization of group theory for the case of groupoids took a while to be studied. A Cayley theorem for groupoids appeared in \cite{ivan2002algebraic}. A theory for normal subgroupoid and quotient groupoid was given in \cite{paques2018galois}. Normal ordered subgroupoids and quotient ordered groupoids were studied in \cite{alyamani2016fibrations}. In \cite{avila2020isomorphism} isomorphism theorems for groupoids were proved, such as results of normal and subnormal groupoid series. In addition, in \cite{avila2020notions} the notions of center, commutator and inner isomorphism for groupoids were presented. 

Our main goal in this paper is to prove a Lagrange's Theorem for groupoids and to show some of its direct consequences in the generalization of group theory. We also extend the Sylow theory, which guarantees the existence of subgroupoids of a given order, as well as some properties about them.

This work is organized as it follows. In section 2 we provide a background about groupoids and fix some notations. In section 3 we determine the order of a finite subgroupoid in terms of its connected components and we present the first part of Lagrange's Theorem. Also, we give a method to classify finite groupoids. In section 4 we discuss about cosets and its relations with the index  and we prove a generalization of the Lagrange's Theorem for groupoids. The last section will address a generalizaton of Sylow theory.

\section{Preliminaries}

Throughout this paper we adopt the algebraic definition of a groupoid, which appears, for instance, in \cite{paques2018galois}. This approach is completely equivalent to its categorical definition. A \textit{groupoid} $\G$ is a nonempty set, equipped with a partially defined binary operation, which we will denote by concatenation, that satisfies the
associative law (whenever it makes sense) and the condition that every element $g \in \G$ has an inverse $g^{-1}$ and a right and a left identity, respectively denoted by $d(g)$ and $r(g)$ and named \textit{domain} and \textit{range} of $g$. It is immediate to check that the composition $gh$ of two elements of $\G$  exists if and only if $d(g) = r(h)$. Also, $\G_0$ will denote the set of identities of $\G$. 

A \emph{subgroupoid} of $\G$ is a nonempty subset $\cH$, equipped with the restriction the of operation of $\G$, that is a groupoid itself. We say that $\cH$ is \emph{wide} if $\cH_0= \G_0$.

Given $e \in \G_0$, we write the set $\G_e = \{ g \in \G : r(g) = d(g) = e \}$. It is easy to verify that $\G_e$ is a group for all $e \in \G_0$, called \textit{the isotropy group associated with} $e$.  The \textit{isotropy subgroupoid of} $\G$ is defined as
\begin{align*}
    \text{Iso}(\G) = \bigcup^\cdot_{e \in \G_0} \G_e.
\end{align*}

A groupoid $\G$ is said to be \emph{connected} if given any $e_1, e_2 \in \G_0$ there exists $g \in \G$ with $d(g) = e_1$ and  $r(g) = e_2$. In a connected groupoid, all the isotropy groups are isomorphic. 

Given $e_1,e_2\in\G_0$ we set $\G(e_1,e_2):=\{g\in \mathcal{G}: d(g)=e_1\,\,\text{and}\,\,r(g)=e_2\}$. It is well-known that any groupoid is a disjoint union of connected subgroupoids. Indeed, we define the following equivalence relation on $\G_0$: for all $e_1,e_2 \in \G_0$, 
\begin{align*}
e_1\sim e_2 &\text{ if and only if } \G(e_1,e_2)\neq \emptyset.
\end{align*}

Every equivalence class $\bar{e} \in\G_0/\!\!\sim$ determines a connected subgroupoid $\G_{\bar{e}}$ of $\G$, whose set of identities is $\bar{e}$. The subgroupoid $\G_{\bar{e}}$ is called the {\it connected component of $\G$ associated to $\bar{e}$}. It is clear that $\G=\dot\cup_{\bar{e}\in \G_0/\!\sim}\G_{\bar{e}}$.

Given a nonempty set $X$, we recall that the \emph{coarse groupoid} associated to $X$ is the groupoid $X^2 = X \times X$, where $\exists(x,y)(u,v)$ if and only if $x = v$ and in this case $(x,y)(u,v) = (u,y)$. The identities of $X^2$ are the elements of the form $(x,x)$ and the inverse element of the pair $(x,y)$ is given by $(x,y)^{-1} = (y, x)$. If $\G$ is a connected groupoid, \cite[Proposition 2.1]{bagio2019partial} relates it closely to its isotropy groups. Under this condition, we have $\G \simeq \G_0^2 \times \G_e$, and
this does not depend on the choice of $e \in \G_0$. Notice that every two finite coarse groupoids with the same amount of identities are isomorphic. Hence we will just denote by $\cA_n$ the coarse groupoid with $n$ identities. Thus, if $\G$ is a finite connected groupoid with $|\G_0| = k$ and $e \in \G_0$, we have $\G \simeq \cA_k \times \G_e$.

\section{Classification of finite groupoids}

Given a finite groupoid, the purpose of this section is to determine the order of a subgroupoid in terms of its connected components and under what conditions this order divides the order of the entire groupoid. This is the first part of Lagrange's Theorem. Also, we want to use this description to classify finite groupoids. We start by studying the case of connected groupoids and then move forward to the general case. 

Fix the notation $|\G|$ for the order of a groupoid $\G$. From now on, $\G$ will always denote a finite groupoid with $k$ identities.

\begin{prop} \label{propdecsubcon}
Let $\cH$ be a connected subgroupoid of a connected groupoid $\G$ and $e \in \cH_0$. If $|\cH_0| = d$, then $\cH \simeq \cA_d \times K$, where $K$ is a subgroup of $\G_e$.
\end{prop}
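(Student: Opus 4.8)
The plan is to reduce everything to the structure theorem for finite connected groupoids recalled in Section~2, namely that a finite connected groupoid $\G$ with $|\G_0| = k$ satisfies $\G \simeq \cA_k \times \G_e$ for every $e \in \G_0$. Since $\G$ is finite and $\cH \subseteq \G$, the subgroupoid $\cH$ is itself finite, and by hypothesis it is connected with $|\cH_0| = d$. Applying the structure theorem to $\cH$ at the chosen identity $e \in \cH_0$ yields $\cH \simeq \cA_d \times \cH_e$, where $\cH_e = \{h \in \cH : d(h) = r(h) = e\}$ is the isotropy group of $\cH$ at $e$. It then only remains to set $K := \cH_e$ and to check that $K$ is a subgroup of $\G_e$.

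For that verification I would first observe that the domain and range maps of $\cH$ agree with those of $\G$: for $h \in \cH$ the inverse $h^{-1}$ computed in $\cH$ coincides with the one computed in $\G$ (inverses are unique), so that $d(h) = h^{-1}h$ and $r(h) = hh^{-1}$ are the same elements whether read inside $\cH$ or inside $\G$. Consequently $\cH_e \subseteq \G_e$ as sets. Since the operation of $\cH$ is, by definition of subgroupoid, the restriction of the operation of $\G$, the group structure on $\cH_e$ is exactly the restriction of that of $\G_e$; hence $K = \cH_e$ is genuinely a subgroup of $\G_e$, and the isomorphism $\cH \simeq \cA_d \times K$ follows at once.

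I do not anticipate a serious obstacle. The only points that deserve a line of care are: confirming that the coarse-groupoid decomposition may legitimately be invoked for $\cH$ (which requires $\cH$ to be connected and finite, both of which hold); and checking that the set of identities of $\cH$ and its isotropy data are inherited from $\G$ rather than being merely abstractly isomorphic copies, so that the conclusion "$K$ is a subgroup of $\G_e$" is a literal inclusion of groups and not just an abstract isomorphism.
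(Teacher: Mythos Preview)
Your proposal is correct and is precisely the straightforward argument the paper has in mind: the paper's own proof consists of the single line ``It is straightforward,'' and your write-up simply spells out that straightforwardness by applying the structure theorem $\cH \simeq \cA_d \times \cH_e$ and noting $\cH_e \leq \G_e$.
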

\begin{proof} It is straightforward.
\end{proof}

We study now the case where $\cH$ is not necessarily connected.

\begin{exe}
Let $\cH$ be a subgroupoid of a connected groupoid $\G$. Suppose that $\cH$ has two connected components $\cK$ and $\mathcal{L}$. Hence $\cK$ and $\mathcal{L}$ are connected subgroupoids of $\G$. Thus, by Proposition \ref{propdecsubcon}, if $d_1 = |\cK_0|$, $d_2 = |\mathcal{L}_0|$, $e \in \cK_0$ and $f \in \mathcal{L}_0$, we obtain $\cK \simeq \cA_{d_1} \times \cK_e$ and $\mathcal{L} \simeq \cA_{d_2} \times \mathcal{L}_f$. Since $\cK_e, \mathcal{L}_f$ are subgroups of $\G_e \simeq \G_f$, it follows that $|\cK_e|$ and $|\mathcal{L}_f|$ divide $|\G_e|$. Hence
\begin{align*}
    |\cH| = |\cK| + |\mathcal{L}| = d_1^2 \cdot |\cK_e| + d_2^2 \cdot |\mathcal{L}_f|.
\end{align*}
\end{exe}

With a simple exercise of computation, the next proposition extends the example above.

\begin{prop} \label{propdecsubnaocon}
Let $\cH$ be a subgroupoid of a connected groupoid $\G$ with connected components $\cK_1, \cK_2, \ldots, \cK_m$ and let $e_i \in (\cK_i)_0$ for all $i \in \{ 1, 2, \ldots , m \}$. Denote by $k_i = |(\cK_i)_0|$. Then 
\begin{align*}
    |\cH| = \sum_{i=1}^m k_i^2 \cdot |(\cK_i)_{e_i}|.
\end{align*}
\end{prop}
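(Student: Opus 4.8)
The plan is to reduce everything to Proposition \ref{propdecsubcon} applied componentwise and then add up the orders. First I would use the structural fact, already recorded in Section 2, that any groupoid is the disjoint union of its connected components; applying this to $\cH$ gives $\cH = \dot\cup_{i=1}^{m}\cK_i$ as a set, hence $|\cH| = \sum_{i=1}^{m}|\cK_i|$. The point to note is that each $\cK_i$, being a connected component of $\cH$, is in particular a connected subgroupoid of the connected groupoid $\G$, which is exactly what is needed to invoke the previous proposition.

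Next, for each $i$ I would apply Proposition \ref{propdecsubcon} to $\cK_i$: with $k_i = |(\cK_i)_0|$ and the chosen $e_i \in (\cK_i)_0$, there is an isomorphism $\cK_i \simeq \cA_{k_i}\times K_i$ for some subgroup $K_i$ of $\G_{e_i}$. A small remark is in order to phrase the result intrinsically: this $K_i$ is precisely the isotropy group $(\cK_i)_{e_i}$. Indeed, the isotropy group at $e_i$ of $\cA_{k_i}\times K_i$ is $\{(e_i,e_i)\}\times K_i \simeq K_i$, and isotropy groups are preserved under groupoid isomorphisms, so $(\cK_i)_{e_i}\simeq K_i$; in particular $|K_i| = |(\cK_i)_{e_i}|$.

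It then remains to count. Since $\cA_{k_i}$ is by definition the coarse groupoid $X^2 = X\times X$ on a set $X$ with $k_i$ elements, we have $|\cA_{k_i}| = k_i^2$, and therefore $|\cK_i| = |\cA_{k_i}\times K_i| = |\cA_{k_i}|\cdot|K_i| = k_i^2\cdot|(\cK_i)_{e_i}|$. Substituting into $|\cH| = \sum_{i=1}^{m}|\cK_i|$ yields the claimed formula. There is essentially no obstacle here — as the authors indicate, it is a computation; the only two points deserving a word of care are the identification of the auxiliary subgroup from Proposition \ref{propdecsubcon} with the genuine isotropy group $(\cK_i)_{e_i}$, and the observation that the connected-component decomposition of $\cH$ is a decomposition into connected subgroupoids of $\G$, which is what licenses the componentwise argument.
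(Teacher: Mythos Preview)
Your argument is correct and is exactly the computation the paper has in mind: decompose $\cH$ into its connected components, apply Proposition~\ref{propdecsubcon} to each $\cK_i$ (identifying the auxiliary subgroup with the isotropy group $(\cK_i)_{e_i}$), and sum the orders $k_i^2\cdot|(\cK_i)_{e_i}|$. The paper itself gives no proof beyond noting that this extends the preceding two-component example by a simple computation, so your write-up is in fact more detailed than the original.
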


The next result is the first part of the Lagrange's Theorem. The complete theorem will be proved in the next section.

\begin{theorem}\label{lagrange1}
Let $\G$ be a groupoid with connected components $\G_i$, $1 \leq i \leq t$, $e_i \in (\G_{i})_0$ and $\cH$ a subgroupoid of $\G$. The following statements hold: \begin{itemize} 
\item[(i)]the order of $\cH$ is of the form
\begin{align*}
    |\cH| =  \sum_{i = 1}^\ell d_i^2m_i,
\end{align*}
where $\ell$ is the amount of connected components of $\cH$, $\sum d_i \leq k$ and $m_i$ divides $|\G_{e_i}|$.
\item[(ii)] if $\G$ and $\cH$ are connected and $|\cH_0|$ divides $|\G_0|$, then $|\cH|$ divides $|\G|$. 
\end{itemize}
\end{theorem}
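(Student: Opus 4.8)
The plan is to reduce both parts to the connected case, where the structure result $\G\simeq\cA_k\times\G_e$ recalled in Section 2, together with the classical Lagrange theorem for groups, does all the work; the two propositions of this section package exactly what is needed.

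For (i), I would first record the observation that every connected component of $\cH$ lies inside a single connected component of $\G$: if $\cK$ is a connected component of $\cH$, its identities are pairwise joined by morphisms of $\cH\subseteq\G$, hence all lie in one class of $\G_0/\!\sim$. So, writing $\cK_1,\dots,\cK_\ell$ for the connected components of $\cH$ and letting $\G_{i(j)}$ be the connected component of $\G$ containing $\cK_j$, each $\cK_j$ is a connected subgroupoid of the connected groupoid $\G_{i(j)}$. Proposition \ref{propdecsubcon} then gives $\cK_j\simeq\cA_{d_j}\times K_j$ with $d_j=|(\cK_j)_0|$ and $K_j$ a subgroup of the isotropy group of $\G_{i(j)}$ at some base identity; since all isotropy groups of a connected groupoid are isomorphic, $m_j:=|K_j|$ divides $|\G_{e_{i(j)}}|$. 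Because $|\cA_{d_j}\times K_j|=d_j^2 m_j$ and the $\cK_j$ are pairwise disjoint, summing yields $|\cH|=\sum_{j=1}^\ell d_j^2 m_j$ (this is just Proposition \ref{propdecsubnaocon} applied inside each $\G_{i(j)}$). Finally the identity sets $(\cK_j)_0$ partition $\cH_0\subseteq\G_0$, so $\sum_j d_j=|\cH_0|\le|\G_0|=k$.

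For (ii), assume $\G$ and $\cH$ connected and put $d:=|\cH_0|$, and suppose $d$ divides $k=|\G_0|$. Fix $e\in\cH_0\subseteq\G_0$. By the discussion in Section 2, $\G\simeq\cA_k\times\G_e$, so $|\G|=k^2\,|\G_e|$; by Proposition \ref{propdecsubcon}, $\cH\simeq\cA_d\times K$ with $K\le\G_e$, so $|\cH|=d^2\,|K|$. Now $d\mid k$ gives $d^2\mid k^2$, and the classical Lagrange theorem gives $|K|\mid|\G_e|$; multiplying these divisibilities shows $|\cH|=d^2|K|$ divides $k^2|\G_e|=|\G|$.

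I do not expect a genuine obstacle here: the mathematical content is entirely contained in Propositions \ref{propdecsubcon} and \ref{propdecsubnaocon}. The only points requiring a moment of care are the claim that a connected component of $\cH$ cannot straddle two connected components of $\G$, and the remark that the statement ``$m_i$ divides $|\G_{e_i}|$'' is meaningful precisely because the isotropy group of a connected groupoid is determined up to isomorphism independently of the chosen identity — both being immediate from the definitions recalled in Section 2.
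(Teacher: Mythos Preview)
Your proposal is correct and follows essentially the same route as the paper: for (i) you localize each connected component of $\cH$ inside a connected component of $\G$ and invoke Propositions \ref{propdecsubcon}/\ref{propdecsubnaocon}, and for (ii) you use the decompositions $\G\simeq\cA_k\times\G_e$, $\cH\simeq\cA_d\times K$ together with the group-theoretic Lagrange, exactly as the paper does. The only cosmetic difference is that the paper first forms $\cH_i=\G_i\cap\cH$ and then splits each $\cH_i$ into connected components (double indices $k_i^j$), whereas you enumerate the connected components of $\cH$ directly and record the map $j\mapsto i(j)$; the content is identical.
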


\begin{proof}
(i) Denote by $\cH_i = \G_i \cap \cH$ and $\cK_{i}^1, \ldots, \cK_i^{n_i}$ the connected components of $\cH_i$. Notice that $\cH_i$ is a subgroupoid of $\G_i$, for $1 \leq i \leq t$. Let $l$ be such that $\cH_i = \G_i \cap \cH \neq \emptyset$, for $1 \leq i \leq l$.  Therefore, by Proposition \ref{propdecsubnaocon},
\begin{align*}
    |\cH_i| = \sum_{j = 1}^{n_i} (k_i^j)^2 \cdot |(\cK_i^j)_{e_i^j}|,
\end{align*}
where $k_i^j = |(\cK_i^j)_0|$, $e_i^j \in (\cK_i^j)_0$  and $|(\cK_i^j)_{e_i^j}|$ divides $\left|\G_{e_i}\right|$, for all $1 \leq i \leq l$ and $1 \leq j \leq n_i$.
Since the connected components of $\G$ are disjoint, it follows that
\begin{align*}
    |\cH| = \sum_{i=1}^l |\cH_i| = \sum_{i = 1}^l \sum_{j=1}^{n_i} (k_i^j)^2 \cdot \left|(\cK_i^j)_{e_i^j}\right|.
\end{align*}

The statement $\sum_{i,j} k_i^j \leq k$ now is obvious since $k_i^j = \left|(\cK_i^j)_0\right|$. In fact, $\cup_{i,j} (\cK_i^j)_0 = \cH_0 \subseteq \G_0$, from where it follows that $\sum_{i,j} k_i^j = \left|\cup_{i,j} (\cK_i^j)_0 \right| \leq |\G_0| = k$.

(ii) Denote by $d = |\cH_0|$. Thus $\cH = \cA_d \times \cH_e$, for any $e \in \cH_0$. Since $|\cH_e|$ divides $|\G_e|$ and $d$ divides $k$, we have that $|\cH| = d^2 \cdot |\cH_e|$ divides $k^2 \cdot |\G_e| = |\G|$.
\end{proof}


The results above show that the order of a finite groupoid $\G$ is of the form
\begin{align*}
    |\G|=n_1^2m_1+\dots+n_\ell^2m_\ell,
\end{align*}
where each $n_i^2m_i$ is the order of a connected component of $\G$ such that $n_i$ is the amount of identities of the $i$-th connected component and $m_i$ is the order of the isotropy group of some identity of the $i$-th connected component. From that we can obtain some interesting results.

\begin{corollary}
Every connected groupoid with order $p_1p_2 \cdots p_n$ is a group, where the $p_i$'s are distinct primes.
\end{corollary}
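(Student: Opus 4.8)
The plan is to reduce everything to the structural decomposition of a finite connected groupoid recalled in Section 2. Let $\G$ be a connected groupoid with $|\G| = p_1 p_2 \cdots p_n$, where the $p_i$ are pairwise distinct primes, and set $k = |\G_0|$. First I would invoke the fact that, for any $e \in \G_0$, one has $\G \simeq \cA_k \times \G_e$, and hence $|\G| = k^2 \cdot |\G_e|$. This is exactly the form for the order of a connected component that the discussion following Theorem \ref{lagrange1} makes explicit.

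The second step is the arithmetic heart of the argument: the integer $p_1 p_2 \cdots p_n$ is squarefree, because the $p_i$ are distinct primes. Since $k^2$ divides $k^2 \cdot |\G_e| = p_1 \cdots p_n$, and a squarefree integer cannot be divisible by the square of any integer greater than $1$, we are forced to have $k^2 = 1$, i.e. $k = 1$. Thus $\G$ has a single identity $e$, and therefore $\G = \G_e$, which is a group.

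I do not expect a genuine obstacle here; the statement is essentially a one-line corollary of the decomposition $|\G| = k^2 \cdot |\G_e|$ together with the observation that the hypothesis forces $|\G|$ to be squarefree. The only point worth stating carefully is why $k^2 \mid |\G|$ with $|\G|$ squarefree implies $k = 1$, which is immediate from unique factorization. One could also remark that the same argument shows, more generally, that any connected groupoid whose order is squarefree is a group.
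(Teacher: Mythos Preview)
Your argument is correct and is essentially the same as the paper's: both use the decomposition $|\G| = k^2 |\G_e|$ for a connected groupoid and the fact that a squarefree integer cannot have $k^2$ as a factor unless $k = 1$. The paper phrases it by contradiction (if $\G$ is not a group then $k \neq 1$), while you argue directly, but the content is identical.
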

\begin{proof}
Assume that $\G$ is not a group. Then $p_1p_2 \cdots p_n=k^2m$, where $k \neq 1$, which is a contradiction.
\end{proof}

Given an order $n$, we have a method to find out how many groupoids of order $n$ there are. It is an inductive process, and we use it to classify groupoids using the classification of finite groups and of smaller connected groupoids.

The classification of a groupoid of order $n$ relies on the number of its connected components. In number theory and combinatorics, a partition of $n$ is a way to write it as a sum of positive integers; two sums that differ only in the position of their terms are considered the same partition. A summand of a partition is called a \emph{part}. Using these notations, the problem can be translated as classifying connected groupoids.

Fixed the order $n$ and the number of connected components $t$, we need to write all the partitions \[\sum_{i=1}^{t}a_i = n\] of $n$, where each part is given by $a_i = n_i^2 m_i$, with $n_i, m_i \in \mathbb{N}$ and $\sum n_i = k$.

At last, we must classify each part as a connected groupoid using the decomposition $\cA_{n_i} \times G_i$, where $G_i$ is a group of order $m_i$. We will classify all groupoids with order between 1 and 6 using this method in the next example.

\begin{exe}
If $\G$ is a groupoid of order 1, 2 or 3, then $\G$ is a group or a disjoint union of groups. As all groups of order 1, 2 or 3 are cyclic, the number of groupoids of order 1, 2, 3 are, respectively, 1, 2 and 3.

If $\G$ is a groupoid of order 4, $\G$ is a group - $\mathbb{Z}_4$ or $K$, the Klein group - a disjoint union of groups or $\G = \cA_2$. The groupoid $\cA_2$ is the smallest groupoid which is not a disjoint union of groups. There are 7 groupoids of order 4.

If $|\G| = 5$, the number of partitions of 5 is 7:

\begin{itemize}
    \item For $5$, $\G$ has one connected component, then $\G \simeq \mathbb{Z}_5$;
    \item For $4+1$, we have the union of disjoint connected groupoids, with 3 options for the connected component of order 4; 
    \item For $3+2$, we have the disjoint union of $\mathbb{Z}_3$ and $\mathbb{Z}_2$;
    \item For $3+1+1$, $2+2+1$, $2+1+1+1$ and $1+1+1+1+1$, we have more disjoint unions of groups.
\end{itemize}

Then, we can see that there are 8 groupoids of order 5.

If $\G$ is a groupoid of order 6, $\G$ has at least one connected component and at most six connected components. If $\G$ has only one connected component, $\G$ is a group, since $6= 2 \cdot 3$, two distinct primes. There are two groups of order 6: $\mathbb{Z}_6$, the cyclic group of order 6, and $D_3$, the dihedral group of degree 3.

If $\G$ has two connected components, we have:
\begin{itemize}
    \item For $5 + 1$, we have the disjoint union of $\mathbb{Z}_5$ and the trivial group.
    \item For $4 + 2$, we have the disjoint union of $\mathbb{Z}_2$ and of a connected groupoid of order 4. There are three options: $\mathbb{Z}_4$, $K$ and $\cA_2$.
    \item For $3 + 3$, we have the disjoint union of $\mathbb{Z}_3$ and $\mathbb{Z}_3$.
\end{itemize}

If $\G$ has three connected components, for $4 + 1 + 1$, there are three options. All the other groupoids will have connected components with order less than 4. For groupoids with four, five or six connected components, we have another 6 options, which are the disjoint union of groups. Therefore, there are 16 non-isomorphic groupoids of order 6.
\end{exe}

    
    

\section{Index, Cosets and Lagrange's Theorem}

In this section we will establish relations between the cardinality of the cosets of a finite groupoid and its order, and use it to generalize the Lagrange's Theorem. For that, if $\cH$ is a subgroupoid of $\G$ the relation $\equiv_{\cH}$ in $\G$ is defined in \cite{paques2018galois} as
\begin{align*}
    x \equiv_\cH y \Leftrightarrow \exists  yx^{-1} \text{ and } yx^{-1} \in \cH.
\end{align*}

For $g \in \G$ we define the right coset of $\cH$ in $\G$ that contains $g$ by $$\cH g=\{hg : h\in \cH \textrm{ and } d(h)=r(g) \}.$$ 

We can define left cosets in a similar way. Besides that, there is a bijection between the sets of left and right cosets given by $\cH g \mapsto g^{-1}\cH$. Therefore we can denote the amount of cosets of $\cH$ in $\G$ by $(\G : \cH)$.

In the case of groups, we have that $ (G:H)|H|=|G|$, because every coset has the same cardinality and every element belongs to a coset. This is not the case of groupoids.

\begin{exe}\label{7belo}
Take the groupoid $\G = \cA_3 \times S_2$, where $S_2 = \{1, \sigma\}$ and the subgroupoid $\cH = \{e_1, e_2, g_{21}, g_{12}\}\times S_2 \cup \{e_3\}$ of  $\G$ as in the following simplified diagram:
\begin{figure}[H]
    \centering
    \begin{tikzcd}[column sep = small]
        e_1 \arrow[distance=2em, in=305, out=235]{}{S_2} \arrow[rr] &     & e_2  \\
         & e_3 & 
    \end{tikzcd}
\end{figure}
We can observe that the cosets $g_{23}\cH = \{g_{23}\}$ and $g_{21}\cH = \{ g_{21}, g_{21}\sigma, e_2, $ $ g_{21}g_{12}\sigma \}$ do not have the same cardinality.

Consider $\cK = \{e_1, e_2, g_{21}, g_{12}\}\times S_2$. Notice that $g_{23} \in g_{23}\cK$ while $\cK g_{23}= \emptyset$.
\end{exe}

If $g \in \G$ is such that $g \cH=\emptyset$ or $\cH g=\emptyset$ we will simply write that the coset of $g$ in $\cH$ is empty. In fact, every element belongs to a coset if and only if $\cH$ is a wide subgroupoid of $\G$. 

\begin{lemma} \label{lemma:classe}
Let $g \in \G$. If $r(g) \in \cH$, denote by $\cH^*$ the connected component of $\cH$ such that $r(g) \in \cH^*$. Define
\begin{align*}
    \delta = \begin{cases}
        |\cH_0^*|, & \text{ if } r(g) \in \cH \\
        0, & \text{ if } r(g) \notin \cH.
    \end{cases}
\end{align*}

Then $|\cH g| = \delta \cdot |\cH_{r(g)}|$.
\end{lemma}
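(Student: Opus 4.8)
The plan is to analyze the right coset $\cH g = \{hg : h \in \cH,\ d(h) = r(g)\}$ by understanding exactly which $h \in \cH$ can be composed with $g$ and when two such products coincide. First I would dispose of the case $r(g) \notin \cH$: then there is no $h \in \cH$ with $d(h) = r(g)$ (since $d(h) \in \cH_0 \subseteq \cH$), so $\cH g = \emptyset$ and $|\cH g| = 0 = \delta \cdot |\cH_{r(g)}|$, as claimed.

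Now suppose $r(g) \in \cH$, and let $\cH^*$ be the connected component of $\cH$ containing $r(g)$. The key observation is that the set of $h \in \cH$ with $d(h) = r(g)$ is exactly $\{h \in \cH^* : d(h) = r(g)\} = \bigcup_{f \in \cH_0^*} \cH^*(r(g), f)$, because any $h \in \cH$ with $d(h) = r(g)$ lies in the same connected component as $r(g)$. So I would write $\cH g = \bigcup_{f \in \cH_0^*} \cH^*(r(g), f)\, g$, a union over the $|\cH_0^*| = \delta$ identities $f$ of $\cH^*$. I claim this union is disjoint: if $hg = h'g$ with $h \in \cH^*(r(g), f)$ and $h' \in \cH^*(r(g), f')$, then right-multiplying by $g^{-1}$ gives $h = h'$ (using $h g g^{-1} = h\, r(g) = h$ since $d(h) = r(g)$), hence $f = r(h) = r(h') = f'$. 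So $|\cH g| = \sum_{f \in \cH_0^*} |\cH^*(r(g), f)|$.

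It remains to see that each block $\cH^*(r(g), f)\, g$ has cardinality $|\cH_{r(g)}|$. For fixed $f$, the map $h \mapsto hg$ from $\cH^*(r(g), f)$ to $\cH^*(r(g), f)\, g$ is a bijection (injectivity again by right-multiplication by $g^{-1}$; surjectivity by definition). And $|\cH^*(r(g), f)| = |\cH^*(r(g), r(g))| = |\cH_{r(g)}|$: indeed, in a connected groupoid, fixing any $a \in \cH^*(r(g), f)$, the map $x \mapsto a x$ is a bijection from $\cH^*(r(g), r(g)) = \cH_{r(g)}$ onto $\cH^*(r(g), f)$, with inverse $y \mapsto a^{-1} y$. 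Combining, $|\cH g| = \delta \cdot |\cH_{r(g)}|$.

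I do not expect a serious obstacle here; the argument is essentially bookkeeping about connected components and repeated use of the cancellation $h g g^{-1} = h$. The one point that needs a little care is making sure that all the relevant compositions are defined — e.g., that $a^{-1} y$ makes sense in the bijection $\cH_{r(g)} \cong \cH^*(r(g), f)$, which holds because $d(a^{-1}) = r(a) = f = r(y)$ — and that the products $hg$ indeed land inside $\cH^*(r(g), f)\, g$ with $r(hg) = r(h) = f$, so that different $f$ genuinely give disjoint blocks. This is the step I would write out most carefully.
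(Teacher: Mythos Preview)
Your proof is correct and follows essentially the same route as the paper's: both use cancellation to identify $|\cH g|$ with $|\{h \in \cH^* : d(h) = r(g)\}|$, and then count this set as $|\cH_0^*|\cdot |\cH_{r(g)}|$. The paper simply labels the last step a ``simple counting argument,'' whereas you spell it out explicitly via the decomposition $\bigcup_{f \in \cH_0^*} \cH^*(r(g),f)$ and the bijection $\cH_{r(g)} \to \cH^*(r(g),f)$.
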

\begin{proof}
Notice that
\begin{align*}
    \cH g = \{ hg : h \in \cH, d(h) = r(g) \}.
\end{align*}

Since the cancellation law is valid in the case of groupoids, we have that
\begin{align*}
    |\cH g| & = |\{ hg : h \in \cH, d(h) = r(g) \}| \\ & = |\{ h \in \cH : d(h) = r(g) \}| \\
    & = |\{ h \in \cH^* : d(h) = r(g) \}|.
\end{align*}

Now we can use a simple couting argument and conclude that
\begin{align*}
    |\{ h \in \cH^* : d(h) = r(g) \}| = \delta \cdot |\cH_{r(g)}|.
\end{align*}
\end{proof}

\begin{lemma} \label{leelee}
Let $\cH$ be a subgroupoid of a connected groupoid $G$, $\cH_1$ and $\cH_2$ two connected components of $\cH$, $f \in \G_0 \setminus \cH_0$ and $$\G'=\{g \in \G: r(g) \in (\cH_1)_0 \textrm{ and }  d(g)= f \in (\cH_2)_0 \}.$$

Then the amount of cosets of the type $\cH g$ where $g \in \G'$ is $$|(\cH_2)_0|(\G_e:(\cH_1)_e).$$
\end{lemma}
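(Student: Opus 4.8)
The plan is to count the set $\G'$ in two ways: by partitioning it into the right cosets $\cH g$ that it contains, and directly, using connectedness of $\G$. (I read the condition defining $\G'$ as ``$r(g)\in(\cH_1)_0$ and $d(g)\in(\cH_2)_0$''; the factor $|(\cH_2)_0|$ in the claimed answer shows that $d(g)$ must be allowed to range over all of $(\cH_2)_0$.) Throughout, fix some $e\in(\cH_1)_0$.

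First I would pin down the size of a single coset. For $g\in\G'$ we have $r(g)\in(\cH_1)_0\subseteq\cH_0$, so Lemma~\ref{lemma:classe} applies with $\cH^{*}=\cH_1$ and yields $|\cH g|=|(\cH_1)_0|\cdot|\cH_{r(g)}|$. Here $\cH_{r(g)}=(\cH_1)_{r(g)}$ is an isotropy group of the connected groupoid $\cH_1$, hence isomorphic to $(\cH_1)_e$; thus every coset $\cH g$ with $g\in\G'$ has the same cardinality $|(\cH_1)_0|\cdot|(\cH_1)_e|$, and $(\cH_1)_e$ is a subgroup of $\G_e$.

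Next I would check that $\G'$ is the disjoint union of these cosets. Each $g\in\G'$ satisfies $g=r(g)g\in\cH g$, so the cosets cover $\G'$. They are contained in $\G'$: if $h\in\cH$ and $d(h)=r(g)$, then $hg$ has $d(hg)=d(g)\in(\cH_2)_0$, and since $d(h)=r(g)\in(\cH_1)_0$ the element $h$ lies in the component $\cH_1$, so $r(hg)=r(h)\in(\cH_1)_0$. They are pairwise disjoint by the usual cancellation argument: from $hg=h'g'$ (with $h,h'\in\cH$) one gets $r(h)=r(h')$, hence $h'^{-1}h\in\cH$ and $g'=(h'^{-1}h)g$, so $\cH g=\cH g'$. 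Consequently the number of distinct cosets of the form $\cH g$, $g\in\G'$, equals $|\G'|\big/\bigl(|(\cH_1)_0|\cdot|(\cH_1)_e|\bigr)$.

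Finally I would count $|\G'|$ directly: $\G'$ is the disjoint union of the sets $\G(e_2,e_1)$ with $e_1\in(\cH_1)_0$ and $e_2\in(\cH_2)_0$, and since $\G$ is connected we have $\G\simeq\cA_k\times\G_e$, so each $\G(e_2,e_1)$ is nonempty with $|\G(e_2,e_1)|=|\G_e|$. Hence $|\G'|=|(\cH_1)_0|\cdot|(\cH_2)_0|\cdot|\G_e|$, and dividing gives the number of cosets as
\[
\frac{|(\cH_1)_0|\cdot|(\cH_2)_0|\cdot|\G_e|}{|(\cH_1)_0|\cdot|(\cH_1)_e|}=|(\cH_2)_0|\cdot\frac{|\G_e|}{|(\cH_1)_e|}=|(\cH_2)_0|\,(\G_e:(\cH_1)_e),
\]
as desired; the value is independent of the chosen $e$ because the isotropy groups of a connected groupoid are mutually isomorphic. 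I expect the main obstacle to be purely organizational: verifying carefully that each coset $\cH g$ with $g\in\G'$ lies inside $\G'$ and that two such cosets are disjoint or equal, and recalling that $|\G(e_2,e_1)|=|\G_e|$ from the decomposition $\G\simeq\cA_k\times\G_e$ noted in Section~2.
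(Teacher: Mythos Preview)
Your proof is correct and follows a different route from the paper's. The paper works explicitly through the product decompositions $\G\simeq\cA_k\times\G_e$ and $\cH_1\simeq\cA_{|(\cH_1)_0|}\times(\cH_1)_e$: writing $g\in\G'$ as a pair $((f,r(g)),g')$ with $g'\in\G_e$, it computes $\cH g=\cH_1 g$ as $\{(f,b):b\in(\cH_1)_0\}\times(\cH_1)_e\,g'$, so that the coset depends only on $f=d(g)$ and on the group coset $(\cH_1)_e\,g'\subseteq\G_e$; this parametrizes the cosets by pairs $(f,(\cH_1)_e\,g')$ and yields the count $|(\cH_2)_0|\cdot(\G_e:(\cH_1)_e)$ directly. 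You instead invoke Lemma~\ref{lemma:classe} to show every coset $\cH g$ with $g\in\G'$ has the same size $|(\cH_1)_0|\cdot|(\cH_1)_e|$, verify that these cosets partition $\G'$, compute $|\G'|=|(\cH_1)_0|\cdot|(\cH_2)_0|\cdot|\G_e|$ from $\G\simeq\cA_k\times\G_e$, and divide. Your argument is a clean cardinality count that reuses the preceding lemma and never unpacks the isomorphism on elements; the paper's version is more structural in that it exhibits what each coset actually is, not merely how many there are. Both correctly read the hypothesis as $d(g)\in(\cH_2)_0$ ranging over all identities of $\cH_2$.
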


\begin{proof}
By Proposition \ref{propdecsubcon} we have that $\G \simeq \cA_{k} \times \G_e$ and $\cH_1 \simeq \cA_{|(\cH_1)_0|} \times \cH_{e_1}.$ Thus the elements $g \in \G'$ can be seen as $((f,r(g)),g') \in \cA_{|(\cH_2)_0|} \times \G_e$ where $g' \in \G_e.$ Hence
\begin{align*}
   \cH_1 g & \simeq
   (\cA_{|(\cH_1)_0|} \times \cH_{e_1})((f,r(g)),g') \\ &= \cA_{|(\cH_1)_0|}(f,r(g)) \times \cH_eg'\\
   & =\{((a,a)(f,r(g)),hg'): a \in \cH_1 \textrm{ and } h \in \cH_{e_1}\}.
\end{align*}

Therefore the amount of cosets of the type $\cH g$, for fixed $f \in (\cH_2)_0$, is determined only by the amount of cosets of the isotropy group. Hence, the number of cosets is $(\G_e:(\cH_1)_e).$ Since there are $|(\cH_2)_0|$ distinct identities, the amount of cosets of the type $\cH g$ where $g \in \G'$ is precisely given by $|(\cH_2)_0|(\G_e:(\cH_1)_e)$.
\end{proof}



\begin{theorem}\label{indice} Let $\cH$ be a subgroupoid of a connected groupoid $\G$ with connected components denoted by $\cH_i$ and, for $e_i \in (\cH_i)_0$, the isotropy group $\cH_{e_i}$ of $\cH_i$ for, $1 \leq i \leq n$. Then
\begin{align*}
    (\G:\cH)=|\cH_0|\left(\sum_{i=1}^n (\G_{e} : \cH_{e_i}) \right).
\end{align*}
\end{theorem}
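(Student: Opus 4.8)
The plan is to enumerate the right cosets of $\cH$ in $\G$ by sorting them according to the connected components of $\cH$ that contain, respectively, the domain and the range of a representative, and then to read off the size of each block from Lemma \ref{leelee}.

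First I would isolate two elementary facts about a nonempty right coset $\cH g=\{hg:h\in\cH,\ d(h)=r(g)\}$. All of its elements share the domain $d(g)$; and, since $r(h)$ runs over the whole identity set of the connected component of $\cH$ through $r(g)$ as $h$ ranges over $\{h\in\cH:d(h)=r(g)\}$ (connectedness of that component), the set of ranges occurring in $\cH g$ is exactly $(\cH_i)_0$, where $\cH_i$ is the unique component with $r(g)\in(\cH_i)_0$ — recall that $\cH g\neq\emptyset$ already forces $r(g)\in\cH_0$. So a nonempty coset $\cH g$ unambiguously determines its common domain $d(g)$ together with an index $i\in\{1,\dots,n\}$. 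Writing $d(g)\in(\cH_j)_0$ for the unique $j$ with $d(g)\in\cH_0$ (this is automatic when $\cH$ is wide, as in Example \ref{7belo}), I obtain a partition of the set of right cosets into blocks $\mathcal{F}_{ij}$, $1\le i,j\le n$, where $\mathcal{F}_{ij}=\{\cH g:d(g)\in(\cH_j)_0,\ r(g)\in(\cH_i)_0\}$; these blocks are pairwise disjoint and exhaust the cosets.

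Next I would evaluate $|\mathcal{F}_{ij}|$, which is exactly the quantity computed in Lemma \ref{leelee}: taking the component containing the range to be $\cH_i$ and the one containing the domain to be $\cH_j$ (the argument of Lemma \ref{leelee} nowhere uses that these are distinct, so the case $i=j$ is covered verbatim), one gets $|\mathcal{F}_{ij}|=|(\cH_j)_0|\,(\G_e:\cH_{e_i})$, the index $(\G_e:\cH_{e_i})$ being understood, as there, through the isomorphism $\G_{e_i}\simeq\G_e$ that connectedness of $\G$ provides, so it does not depend on the choices of $e_i$ and $e$. Summing over all pairs and factoring the double sum,
\begin{align*}
    (\G:\cH)&=\sum_{i=1}^n\sum_{j=1}^n|(\cH_j)_0|\,(\G_e:\cH_{e_i})\\
    &=\Big(\sum_{j=1}^n|(\cH_j)_0|\Big)\Big(\sum_{i=1}^n(\G_e:\cH_{e_i})\Big)=|\cH_0|\sum_{i=1}^n(\G_e:\cH_{e_i}),
\end{align*}
the last step using that $(\cH_1)_0,\dots,(\cH_n)_0$ partition $\cH_0$.

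I expect the real work here to be bookkeeping rather than any new idea: one must check that the blocks $\mathcal{F}_{ij}$ are genuinely disjoint and cover everything — that a coset pins down both indices — and confirm that Lemma \ref{leelee} applies unchanged when the range-component coincides with the domain-component. The one genuinely delicate point is completeness of the list: a nonempty coset $\cH g$ always has $r(g)\in\cH_0$, but $d(g)$ need not lie in $\cH_0$ when $\cH$ fails to be wide, so either $\cH$ is to be taken wide here or the count must be read as ranging over the cosets whose representative has domain in $\cH_0$; that is the only place where the partition argument needs care.
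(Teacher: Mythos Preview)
Your argument is correct and takes a genuinely different route from the paper's. The paper proceeds by a double induction: first on the number of identities of a connected $\cH$, establishing $(\G:\cH)=l(\G_e:\cH_e)$ by adjoining one identity at a time, and then on the number of connected components, adjoining one component $\cH_{m+1}$ at a time and invoking Lemma~\ref{leelee} separately for the ``cross'' cosets with $d(g)\in(\cH_{m+1})_0$, $r(g)\in(\cH_i)_0$ and vice versa. You bypass the induction entirely: you partition the cosets into blocks $\mathcal{F}_{ij}$ indexed by the components containing $r(g)$ and $d(g)$, apply Lemma~\ref{leelee} uniformly to every block (noting correctly that its proof works verbatim when $i=j$), and factor the resulting double sum. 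Your route is shorter and makes the structure of the formula transparent; the paper's induction, by contrast, keeps explicit track of how cosets of the smaller subgroupoid $\cH'$ persist as cosets of $\cH$, which has some independent interest but reproves pieces of Lemma~\ref{leelee} along the way.

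The caveat you flag in your final paragraph is well taken and is not a defect of your argument but of the statement as written. When $\cH$ is not wide there exist nonempty cosets $\cH g$ with $d(g)\notin\cH_0$, and these fall into no block $\mathcal{F}_{ij}$. The paper's own proof has the same blind spot: already its base case ``$(\G:\cH)=(\G_e:\cH_e)$'' for a one-identity $\cH$ is only correct if one restricts attention to $g$ with $d(g)\in\cH_0$, and the subsequent induction steps likewise enumerate only elements with both $d(g)$ and $r(g)$ in $\cH_0$. So the formula should be read either under the hypothesis that $\cH$ is wide---which is the only way it is used afterwards, in Theorems~\ref{resp} and~\ref{lagrange2}---or as a count of those cosets whose representatives have domain in $\cH_0$. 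Under either reading your proof is complete.
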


\begin{proof}
The proof will be given by induction in the number of connected components of $\cH$ and in the number of identities of $\cH$. Assume that $\cH$ is connected with only one identity. Given $g \in \G$, the coset $\cH g$ is not empty if and only if there is $h \in \cH$ such that $r(g)=d(h)$. Hence $(\G:\cH)=(\G_e:\cH_e)$.

Now, assume that for any subgroupoid $\cH$ which is connected and has $l$ identities, the equality \[(\G:\cH)=l(\G_e:\cH_e)\] holds.
    
For the induction step, suppose that $\cH$ is connected and has $l+1$ identities denoted by $\{e_1,\dots,e_{l+1}\}$. Consider the subgroupoid $\cH'=\{g \in \cH : d(g) = e_i \textrm{ and } r(g)=e_j, \textrm{ for } 1\leq i,j \leq l \}$. By the induction assumption we have that $(\G:\cH')=l(\G_e:\cH_e)$.

Notice that if $g \in \G$ is such that $d(g) \neq e_i$ and $r(g) \neq e_j$, for $1 \leq i,j \leq l+1$, then the coset of $g$ in $\cH$ is empty. If $d(g) = e_{l+1}$ and $r(g) = e_j$, for $1\leq j \leq l$ or $d(g) = e_{i}$, for $1\leq i \leq l$, and $r(g)= e_{l+1}$, the coset of $g$ in $\cH$ was already counted by $(\G:\cH')$.
Therefore, it only remains to us to observe what happens to the elements $g \in \G$ such that $d(g) = r(g)= e_{l+1}$, that generate $(\G_e:\cH_e)$ additional cosets.
Hence
\begin{align*}
    (\G:\cH)=(\G:\cH')+(\G_e:\cH_e)=l(\G_e:\cH_e)+(\G_e:\cH_e)=(l+1)(\G_e:\cH_e),
\end{align*}
which concludes the induction in the number of identities. 

Assume now that for any subgroupoid $\cH$ which has $m$ connected components and $l$ identities, the equality
\begin{align*}
    (\G:\cH)=|\cH_0|\left( \sum_{i=1}^m (\G_{e}: \cH_{e_i})  \right) 
\end{align*} holds.
Suppose that $\cH$ has $m+1$ connected components and $l$ identities. By induction assumption, the subgroupoid $\cH' = \cup_{i=1}^m \cH_i$ is such that 
\begin{align*}
    (\G:\cH')=|\cH'_0|\left(\sum_{i=1}^m (\G_{e}: \cH_{e_i})  \right)
\end{align*}
which is the number of cosets of elements $g \in \G$ such that $d(g)$ and $r(g)$ are in $\cH'_0$. 

Notice that $\cH_{m+1}$ is a connected component, so the first part of the proof says that the cosets of $g \in \G$ such that $d(g) \in \cH_{m+1}$ and  $r(g) \in \cH_{m+1}$ are counted as
\begin{align*}
    |(\cH_{m+1})_0|(\G_e : \cH_{e_{m+1}}).
\end{align*}
If $g \in \G$ is such that $d(g) \in \cH_{m+1}$ and $r(g) \in \cH_{i}$, for some $1 \leq i \leq m$, the number of cosets of the type $\cH g$ is $|(\cH_{m+1})_0|(\G_e:\cH_{e_i})|$ by Lemma \ref{leelee}. Since there are $m$ connected components we have that the number of cosets generated by $\{g \in \G:d(g) \in \cH_{m+1} \textrm{ and }r(g) \in \cH_{i}$\} is
\begin{align*}
    |(\cH_{m+1})_0|\left(\sum_{i=1}^m (\G_{e}:\cH_{e_i})\right).
\end{align*}
If $g \in \G$ is such that $d(g) \in \cH_{i}$, for some $1 \leq i \leq m$ and $r(g) \in  \cH_{m+1}$, the number of cosets of the type $\cH g$ is $(\G_{e}:\cH_{e_{m+1}})$ by Lemma \ref{leelee}. Since there are $m$ connected components we have that the number of cosets generated by $\{g \in \G:d(g) \in \cH_{i} \textrm{ and }r(g) \in \cH_{m+1}$\} is $|\cH'_0|(\G_{e}:\cH_{e_{m+1}})$.
Hence
\begin{align*}
    (\G:\cH)&=|\cH'_0|\left(\sum_{i=1}^m (\G_{e}: \cH_{e_i})  \right)+|(\cH_{m+1})_0|\left(\sum_{i=1}^m (\G_{e}:\cH_{e_i})\right)\\
     & + |\cH'_0|(\G_{e}:\cH_{e_{m+1}})+
    |(\cH_{m+1})_{0}|(\G_{e}:\cH_{e_{m+1}})  \\
    &=|\cH_0|\left(\sum_{i=1}^m (\G_{e}: \cH_{e_i})  \right)+
    |\cH_{0}|(\G_{e}: \cH_{e_{m+1}})\\
    &=|\cH_0|\left(\sum_{i=1}^{m+1} (\G_{e}: \cH_{e_i}) \right).
\end{align*}
\end{proof}

\begin{theorem} Let $\G_j$ be the connect components of $\G$ and $\G_{e_j}$ the isotropy group of $\G_j$ for $1 \leq j \leq t$, and let $\cH$ be a subgroupoid of $\G$ with connected components denoted by $\cK_i$ and, for $e_i \in (\cK_i)_0$, $\cK_{e_i}$ the isotropy group of $\cK_i$, for $1 \leq i \leq n$ and $\cH_j=\G_j \cap \cH$. Then
\begin{align*}
    (\G:\cH)=\sum_{j=1}^t |(\cH_j)_0|\left(\sum_{i=1}^n (\G_{e_i} : \cK_{e_i})\right).
\end{align*}
\end{theorem}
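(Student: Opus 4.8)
The plan is to reduce the general case to the connected case already settled in Theorem \ref{indice}, by cutting $(\G:\cH)$ along the connected components of $\G$. The first step is the decomposition
\begin{align*}
    (\G:\cH)=\sum_{j=1}^{t}(\G_j:\cH_j),\qquad \cH_j:=\G_j\cap\cH.
\end{align*}
To see this, fix $g\in\G$ and let $\G_j$ be the unique connected component with $g\in\G_j$; then $d(g),r(g)\in(\G_j)_0$, and any $h\in\cH$ with $d(h)=r(g)$ satisfies $d(h)=r(g)\in(\G_j)_0$, hence $r(h),d(h)\in(\G_j)_0$ and $h\in\G_j\cap\cH=\cH_j$. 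Thus $\cH g=\cH_j g\subseteq\G_j$, so a nonempty right coset of $\cH$ in $\G$ lies entirely in one component $\G_j$ and forces $\cH_j\neq\emptyset$. Since the components $\G_j$ are pairwise disjoint, cosets coming from different components are disjoint, hence distinct, while inside a fixed $\G_j$ the nonempty right cosets of $\cH$ are exactly the nonempty right cosets of $\cH_j$ in $\G_j$ (they are the $\equiv_{\cH_j}$-classes on $\{g\in\G_j:r(g)\in(\cH_j)_0\}$). This yields the displayed identity, with the convention that components $\G_j$ disjoint from $\cH$ contribute $(\G_j:\cH_j)=0$.

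The second step is to apply Theorem \ref{indice} to each pair $(\G_j,\cH_j)$: $\G_j$ is connected, and the connected components of $\cH_j$ are precisely those $\cK_i$ with $(\cK_i)_0\subseteq(\G_j)_0$. Choosing $e_j\in(\G_j)_0$ and using that in a connected groupoid all isotropy groups are isomorphic (so $\G_{e_j}\simeq\G_{e_i}$ whenever $\cK_i\subseteq\G_j$), Theorem \ref{indice} expresses $(\G_j:\cH_j)$ as the product of a combinatorial factor counting the identities of $\cH_j$ and the sum $\sum_{i\,:\,\cK_i\subseteq\G_j}(\G_{e_i}:\cK_{e_i})$ of indices of the isotropy groups. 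Substituting these into the first step, and observing that the index sets $\{i:\cK_i\subseteq\G_j\}$, $1\leq j\leq t$, partition $\{1,\dots,n\}$, one collects the terms component-by-component and rearranges to obtain
\begin{align*}
    (\G:\cH)=\sum_{j=1}^{t}|(\cH_j)_0|\left(\sum_{i=1}^{n}(\G_{e_i}:\cK_{e_i})\right),
\end{align*}
where for each fixed $j$ only the components $\cK_i$ contained in $\G_j$ actually contribute to the inner sum.

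The computational content — sizes of cosets and their number within a single connected component — is entirely supplied by Lemma \ref{lemma:classe}, Lemma \ref{leelee} and Theorem \ref{indice}, so I expect no genuine difficulty there. The delicate point, and the one I would handle most carefully, is the bookkeeping: attaching each connected component $\cK_i$ of $\cH$ to the unique component $\G_j$ of $\G$ that contains it, checking that Theorem \ref{indice} applies to $\cH_j$ even in the degenerate cases where $\cH_j$ is empty or reduces to a single isotropy group, and justifying the passage from $(\G_{e_j}:\cK_{e_i})$ to $(\G_{e_i}:\cK_{e_i})$, which is legitimate because $e_i\in(\G_j)_0$ and isotropy groups along a connected groupoid are mutually isomorphic. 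Once the indexing is organized, the statement is just the sum over $j$ of the per-component formula.
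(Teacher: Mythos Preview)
Your approach is exactly the paper's: its proof is the single sentence ``Just apply the Theorem \ref{indice} in each connected component of $\G$,'' and you have written out precisely that reduction --- splitting $(\G:\cH)$ as $\sum_j(\G_j:\cH_j)$ and invoking Theorem \ref{indice} on each summand. Your added bookkeeping (matching each $\cK_i$ to its ambient $\G_j$, handling empty $\cH_j$, and noting that for fixed $j$ only those $\cK_i\subseteq\G_j$ contribute to the inner sum) is correct and in fact clarifies an ambiguity in the displayed formula that the paper leaves implicit.
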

\begin{proof}
Just apply the Theorem \ref{indice} in each connected component of $\G$.
\end{proof}

\begin{theorem}\label{resp} Let $\G$ be a connected groupoid and $\cH$ a wide subgroupoid with connected components $\cH_i$ and, for $e_i \in (\cH_i)_0$, $\cH_{e_i}$ the isotropy group of $\cH_i$ for $1 \leq i \leq n$. Then
\begin{align*}
    |\G| = |\G_0|\left(\sum_{i=1}^n (\G_{e_i} : \cH_{e_i})|(\cH_i)_{0}||\cH_{e_i}| \right).
\end{align*}
\end{theorem}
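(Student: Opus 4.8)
The plan is to reduce the statement to two ingredients: the decomposition $\G \simeq \cA_{|\G_0|} \times \G_e$ of a finite connected groupoid, and the ordinary Lagrange theorem for groups, with wideness of $\cH$ used only to pin down $|\G_0|$. First I would record the two facts that do all the work. Since $\G$ is connected with $|\G_0| = k$, the isomorphism $\G \simeq \cA_k \times \G_e$ gives $|\G| = k^2\,|\G_e|$, and this quantity does not depend on the chosen $e \in \G_0$ because all isotropy groups of a connected groupoid are isomorphic. Since $\cH$ is \emph{wide}, its set of identities is all of $\G_0$, so the identity sets $(\cH_i)_0$ of its connected components $\cH_i$ partition $\G_0$; hence $k = |\G_0| = \sum_{i=1}^n |(\cH_i)_0|$.

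The rest is substitution. Using $k = |\G_0|$ for one factor and $k = \sum_i |(\cH_i)_0|$ for the other,
\[
 |\G| \;=\; k^2\,|\G_e| \;=\; |\G_0|\sum_{i=1}^n |(\cH_i)_0|\cdot |\G_e| .
\]
For each $i$, the isotropy group $\cH_{e_i}$ of $\cH_i$ is a subgroup of $\G_{e_i}$, and $|\G_{e_i}| = |\G_e|$, so the classical Lagrange theorem gives $|\G_e| = (\G_{e_i} : \cH_{e_i})\,|\cH_{e_i}|$. Plugging this in and reordering the factors inside the sum produces exactly
\[
 |\G| \;=\; |\G_0|\left(\sum_{i=1}^n (\G_{e_i} : \cH_{e_i})\,|(\cH_i)_0|\,|\cH_{e_i}|\right),
\]
which is the claim.

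Alternatively, and perhaps more in the spirit of this section, the identity can be read off from the coset count. Because $\cH$ is wide, every element of $\G$ lies in some right coset and the cosets partition $\G$, so $|\G| = \sum_{\cH g} |\cH g|$ over distinct cosets. Grouping the cosets by which connected component $\cH_i$ contains $r(g)$ — well defined since all elements of a fixed coset $\cH g$ have range in one and the same component — Lemma~\ref{lemma:classe} gives $|\cH g| = |(\cH_i)_0|\,|\cH_{r(g)}| = |(\cH_i)_0|\,|\cH_{e_i}|$ for each such coset, while the argument behind Theorem~\ref{indice} (its connected-component case together with Lemma~\ref{leelee}, summed over the $|\G_0|$ possible values of $d(g)$) shows there are exactly $|\G_0|\,(\G_{e_i} : \cH_{e_i})$ of them; multiplying and summing over $i$ gives the formula once more.

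Either way the proof is short; the only points needing care are bookkeeping ones. I would make sure to flag that wideness is precisely what makes the cosets exhaust $\G$ (Example~\ref{7belo} shows the conclusion genuinely fails without it), and that the symbol $(\G_{e_i} : \cH_{e_i})$ is unambiguous because in a connected groupoid all isotropy groups share the same order. I do not expect a real obstacle beyond assembling these pieces in the right order.
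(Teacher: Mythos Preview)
Your primary argument is correct and essentially identical to the paper's: both start from $|\G|=|\G_0|^2|\G_e|$, replace one factor of $|\G_0|$ by $\sum_i |(\cH_i)_0|$ using wideness, and then apply the group-theoretic Lagrange identity $|\G_{e_i}|=(\G_{e_i}:\cH_{e_i})\,|\cH_{e_i}|$ inside the sum. Your alternative coset-counting derivation is also sound and offers a complementary viewpoint that the paper does not spell out here.
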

\begin{proof} We have that
\begin{align*}
    |\G| & = |\G_0|^2|\G_e| \\
    &=|\G_0|\left(\sum_{i=1}^n|(\cH_{i})_0|\right)|\G_e|\\
    &=|\G_0|\left(\sum_{i=1}^n|(\cH_{i})_0||\G_e|\right)\\
    &=|\G_0|\left(\sum_{i=1}^n|(\cH_{i})_0||\G_{e_i}|\right)\\
    &=  |\G_0| \left(\sum_{i=1}^n|(\cH_{i})_0| (\G_{e} : \cH_{e_i})|\cH_{e_i}|\right),
\end{align*}which concludes the proof.
\end{proof}

Notice that if $H$ is a subgroup of a finite group $G$, Theorem \ref{resp} precisely states that $|G| = (G:H)|H|.$ The condition of $\G$ being a connected groupoid can be disregarded, as we can see below.

The next result is the second part of the Lagrange's Theorem.

\begin{theorem}\label{lagrange2} Let $\G_j$ be the connected components of $\G$ and $\G_{e_j}$ the isotropy group of $\G_j$ for $1 \leq j \leq t$, let $\cH$ be a wide subgroupoid of $\G$ with connected components $\cK_i$ and, for $e_i \in (\cK_i)_0$, let $\cK_{e_i}$ be the isotropy group of $\cK_i$, for $1 \leq i \leq n$ and $\cH_j=\G_j \cap \cH$. Then
\end{theorem}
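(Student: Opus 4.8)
The plan is to combine the general index formula (the theorem immediately preceding, whose proof is ``just apply Theorem \ref{indice} in each connected component'') with the order decomposition furnished by Theorem \ref{lagrange1}(i) and the isotropy-group computation used in Theorem \ref{resp}. Since $\cH$ is wide, $\cH_0 = \G_0$, so $\bigcup_i (\cK_i)_0 = \G_0$ and $\sum_i |(\cK_i)_0| = |\G_0| = k$; moreover each identity $e_i \in (\cK_i)_0$ lies in exactly one connected component $\G_{j(i)}$ of $\G$, so the relevant ambient isotropy group is $\G_{e_i} = \G_{e_{j(i)}}$. I expect the statement to assert
\begin{align*}
    |\G| = \sum_{j=1}^t |(\G_j)_0|\left(\sum_{\{i\,:\,\cK_i \subseteq \G_j\}} (\G_{e_j} : \cK_{e_i})\, |(\cK_i)_0|\, |\cK_{e_i}| \right),
\end{align*}
which is the natural non-connected analogue of Theorem \ref{resp}.

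The first step is to write $|\G| = \sum_{j=1}^t |\G_j|$ using the disjointness of the connected components of $\G$, and then apply Theorem \ref{resp} to each $\G_j$ together with the wide subgroupoid $\cH_j = \G_j \cap \cH$ of $\G_j$. One has to check that $\cH_j$ is indeed a wide subgroupoid of $\G_j$: it is a subgroupoid because it is an intersection of subgroupoids, and it is wide because $(\cH_j)_0 = (\G_j)_0 \cap \cH_0 = (\G_j)_0 \cap \G_0 = (\G_j)_0$ by wideness of $\cH$. The connected components of $\cH_j$ are exactly those $\cK_i$ with $\cK_i \subseteq \G_j$, i.e. with $e_i \in (\G_j)_0$, and for such $i$ the isotropy group of $\G_j$ at $e_i$ is $\G_{e_j}$ (all isotropy groups in a connected groupoid are isomorphic). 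Summing the resulting expressions over $j$ yields the claimed formula.

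Alternatively, and perhaps more cleanly, one can derive it directly from Theorem \ref{lagrange1}(i): since $\cH$ is wide, $|\G| = \sum_j |\G_j| = \sum_j |(\G_j)_0|^2 |\G_{e_j}|$, and then rewrite $|(\G_j)_0|^2 = |(\G_j)_0| \cdot |(\G_j)_0| = |(\G_j)_0| \sum_{\{i : \cK_i \subseteq \G_j\}} |(\cK_i)_0|$ using $\sum_{\{i : \cK_i \subseteq \G_j\}} |(\cK_i)_0| = |(\cH_j)_0| = |(\G_j)_0|$, and finally replace $|\G_{e_j}| = (\G_{e_j} : \cK_{e_i})|\cK_{e_i}|$ inside each summand by Lagrange's theorem for the finite groups $\cK_{e_i} \leq \G_{e_j}$. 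Both routes are essentially a bookkeeping exercise.

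The only real obstacle is notational: one must be careful that the inner index $(\G_{e_j} : \cK_{e_i})$ uses the isotropy group $\G_{e_j}$ of the component $\G_j$ that actually contains $\cK_i$, not an unrelated $\G_{e_i}$, and that the double sum is organized so each connected component $\cK_i$ of $\cH$ is counted once, attached to its unique host component $\G_j$. Once the indexing convention in the statement is pinned down, the proof is immediate from the preceding theorem, exactly as the analogous corollaries (the theorem before this one, and Theorem \ref{resp}) were; I would simply write ``Apply Theorem \ref{resp} to each connected component $\G_j$ of $\G$ with the wide subgroupoid $\cH_j = \G_j \cap \cH$ and sum over $j$, using that $|\G| = \sum_{j=1}^t |\G_j|$.''
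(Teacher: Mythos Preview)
Your proposal is correct and matches the paper's own proof exactly: the paper simply writes ``Just apply the Theorem \ref{resp} in each connected component of $\G$,'' which is precisely your recommended one-liner. Your care in verifying that $\cH_j = \G_j \cap \cH$ is wide in $\G_j$ and in restricting the inner sum to those $\cK_i \subseteq \G_j$ actually yields a cleaner formula than the paper's stated one (which writes $|(\cH_j)_0|$ in place of $|(\cK_i)_0|$ and runs the inner sum over all $i$), but the underlying argument is identical.
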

\begin{align*}
   |\G| = \sum_{j=1}^t |(\G_j)_0|\left(\sum_{i=1}^n (\G_{e_j} : \cK_{e_i})|(\cH_j)_{0}||\cK_{e_i}| \right).
\end{align*}
\begin{proof}
Just apply the Theorem \ref{resp} in each connected component of $\G$.\end{proof}

Using Theorem \ref{lagrange1} and Theorem \ref{lagrange2}, we can finally state a generalization of the Lagrange's Theorem, without concern about the connectedness of the groupoid.

\begin{theorem}[Lagrange's Theorem for Groupoids]\label{lagrange}
Let $\G$ be a groupoid with connected components $\G_j$, $1 \leq j \leq t$, $e_j \in (\G_{j})_0$ and $\cH$ a subgroupoid of $\G$. The following statements hold: \begin{itemize} 
\item[(i)]the order of $\cH$ is of the form
\begin{align*}
    |\cH| =  \sum_{j = 1}^\ell d_j^2m_j,
\end{align*}
where $\ell$ is the amount of connected components of $\cH$, $\sum d_j \leq k$ and $m_j$ divides $|\G_{e_j}|$.
\item[(ii)] if $\G$ and $\cH$ are connected and $|\cH_0|$ divides $|\G_0|$, then $|\cH|$ divides $|\G|$.
\item[(iii)] if $\G_{e_j}$ is the isotropy group of $\G_j$, $\cH$ is wide with connected components $\cK_i$ and, for $e_i \in (\cK_i)_0$, $\cK_{e_i}$ is the isotropy group of $\cK_i$, for $1 \leq i \leq n$ and $\cH_j=\G_j \cap \cH$, then
\begin{align*}
   |\G| = \sum_{j=1}^t |(\G_j)_0|\left(\sum_{i=1}^n (\G_{e_j} : \cK_{e_i})|(\cH_j)_{0}||\cK_{e_i}| \right).
\end{align*}
\end{itemize}
\end{theorem}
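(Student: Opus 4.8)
The plan is to observe that the final theorem is a pure repackaging of results already established in the excerpt, so the proof amounts to a bookkeeping argument that assembles three previously proved statements. Part (i) is verbatim Theorem \ref{lagrange1}(i), and part (ii) is verbatim Theorem \ref{lagrange1}(ii); for these I would simply cite Theorem \ref{lagrange1} and note that the hypotheses match exactly (the connected components $\G_j$, the chosen identities $e_j$, and the subgroupoid $\cH$ play identical roles). Part (iii) is the statement of Theorem \ref{lagrange2}, whose proof in turn reduces via Theorem \ref{resp} to the group-theoretic identity $|G| = (G:H)|H|$ applied inside each isotropy group, together with the decomposition $|\G_j| = |(\G_j)_0|^2 |\G_{e_j}|$ for each connected component and the fact that $\cH$ being wide forces $\sum_i |(\cH_j)_0|$ over the components $\cK_i$ meeting $\G_j$ to equal $|(\G_j)_0|$.

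Concretely, I would write the proof in three short steps. First, for (i) and (ii), invoke Theorem \ref{lagrange1} directly, remarking only that no connectedness of $\G$ is needed there since that theorem already handles an arbitrary finite groupoid. Second, for (iii), apply Theorem \ref{lagrange2}; since that theorem is stated under exactly the hypotheses listed in (iii) (wide subgroupoid, the decomposition of $\G$ into connected components $\G_j$ with isotropy groups $\G_{e_j}$, the components $\cK_i$ of $\cH$ with isotropy groups $\cK_{e_i}$, and $\cH_j = \G_j \cap \cH$), the conclusion is immediate. Third, I would add a one-line remark that Theorem \ref{lagrange2} itself follows by applying Theorem \ref{resp} componentwise, so the whole statement is ultimately a consequence of the coarse-groupoid decomposition $\G_j \simeq \cA_{|(\G_j)_0|} \times \G_{e_j}$ plus Lagrange's theorem for groups applied to each $\cK_{e_i} \leq \G_{e_j}$.

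Since every ingredient is already available, there is no genuine mathematical obstacle; the only thing to be careful about is that the indexing in part (iii) is consistent with Theorem \ref{lagrange2} — in particular that the inner sum over $i$ ranges over all connected components of $\cH$ but contributes nontrivially (via the index $(\G_{e_j} : \cK_{e_i})$ and the factor $|(\cH_j)_0|$) only through those $\cK_i$ actually lying in $\G_j$, and that wideness of $\cH$ is what makes the identity an equality rather than an inequality. I would therefore phrase the proof as follows.

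\begin{proof}
Items (i) and (ii) are precisely the content of Theorem \ref{lagrange1}, which holds for an arbitrary finite groupoid $\G$ and an arbitrary subgroupoid $\cH$, with the connected components $\G_j$ and chosen identities $e_j \in (\G_j)_0$ as in the statement; no further argument is needed.

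For item (iii), the hypotheses are exactly those of Theorem \ref{lagrange2}: $\cH$ is a wide subgroupoid of $\G$, the $\G_j$ are the connected components of $\G$ with isotropy groups $\G_{e_j}$, the $\cK_i$ are the connected components of $\cH$ with isotropy groups $\cK_{e_i}$ (for $e_i \in (\cK_i)_0$), and $\cH_j = \G_j \cap \cH$. Applying Theorem \ref{lagrange2} directly yields
\begin{align*}
   |\G| = \sum_{j=1}^t |(\G_j)_0|\left(\sum_{i=1}^n (\G_{e_j} : \cK_{e_i})|(\cH_j)_{0}||\cK_{e_i}| \right),
\end{align*}
as claimed. (Recall that Theorem \ref{lagrange2} is itself obtained by applying Theorem \ref{resp} to each connected component $\G_j$ of $\G$, where $\G_j \simeq \cA_{|(\G_j)_0|} \times \G_{e_j}$ and each $\cK_{e_i} \leq \G_{e_j}$ satisfies $|\G_{e_j}| = (\G_{e_j}:\cK_{e_i})|\cK_{e_i}|$ by Lagrange's theorem for groups; wideness of $\cH$ ensures that for each fixed $j$ the identities of the components of $\cH$ contained in $\G_j$ exhaust $(\G_j)_0$.)
\end{proof}
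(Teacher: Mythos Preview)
Your proposal is correct and matches the paper's approach exactly: the paper does not give a separate proof of Theorem~\ref{lagrange} but simply states it as a combination of Theorem~\ref{lagrange1} (parts (i) and (ii)) and Theorem~\ref{lagrange2} (part (iii)). Your additional remark unpacking Theorem~\ref{lagrange2} via Theorem~\ref{resp} is accurate and goes slightly beyond what the paper writes, but it is consistent with the paper's own derivation of Theorem~\ref{lagrange2}.
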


\begin{corollary}
Let $\G$ be a connected groupoid and $\cH$ be a wide subgroupoid with connected components $\cH_i$ and isotropy groups $\cH_{e_i}$, $e_i \in (\cH_i)_0$, such that $|(\cH_i)_0|$ and $|\cH_{e_i}|$ are the same for all $1 \leq i \leq n$. Then
\begin{align*}
    |\G| = (\G : \cH)|(\cH_i)_{0}||\cH_{e_i}|.
\end{align*}
\end{corollary}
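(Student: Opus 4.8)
The plan is to derive this corollary directly from Theorem \ref{resp} by simplifying the summation under the extra hypothesis that all connected components of $\cH$ share a common value $d := |(\cH_i)_0|$ for the number of identities and a common value $q := |\cH_{e_i}|$ for the order of the isotropy group. First I would recall from Theorem \ref{resp} that, since $\G$ is connected and $\cH$ is wide,
\begin{align*}
    |\G| = |\G_0|\left(\sum_{i=1}^n (\G_{e_i} : \cH_{e_i})\,|(\cH_i)_0|\,|\cH_{e_i}| \right),
\end{align*}
where $\G_{e_i}$ denotes the isotropy group of $\G$ at $e_i$; because $\G$ is connected, all these isotropy groups are isomorphic to a fixed group $\G_e$, so each index $(\G_{e_i}:\cH_{e_i})$ may be written as $(\G_e:\cH_{e_i})$.

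Next I would substitute the common values into the sum. Each summand becomes $(\G_e:\cH_{e_i})\cdot d\cdot q$, and since $d$ and $q$ do not depend on $i$ they factor out of the summation, giving
\begin{align*}
    |\G| = |\G_0|\, d\, q \left(\sum_{i=1}^n (\G_e : \cH_{e_i})\right).
\end{align*}
Then I would invoke Theorem \ref{indice}, which states precisely that $(\G:\cH) = |\cH_0|\left(\sum_{i=1}^n (\G_e:\cH_{e_i})\right)$; moreover, since $\cH$ is wide, $|\cH_0| = |\G_0|$, so $|\G_0|\left(\sum_{i=1}^n (\G_e:\cH_{e_i})\right) = (\G:\cH)$. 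Combining this with the previous display yields $|\G| = (\G:\cH)\, d\, q = (\G:\cH)\,|(\cH_i)_0|\,|\cH_{e_i}|$, which is exactly the claimed identity.

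This argument is essentially bookkeeping: the only point requiring a little care is matching the two different ways the sum $\sum_{i=1}^n (\G_e:\cH_{e_i})$ enters — once from Theorem \ref{resp} and once from Theorem \ref{indice} — and confirming that the wideness hypothesis is what lets us replace $|\cH_0|$ by $|\G_0|$ so the two expressions line up. I do not anticipate a genuine obstacle; the main thing to verify is that all indices $(\G_{e_i}:\cH_{e_i})$ can legitimately be treated as indices in the fixed isotropy group $\G_e$, which follows from connectedness of $\G$ as noted in the preliminaries. Hence the proof reduces to citing Theorem \ref{resp}, factoring the constants $d$ and $q$ out of the sum, and recognizing the remaining factor as $(\G:\cH)$ via Theorem \ref{indice} together with $|\cH_0| = |\G_0|$.
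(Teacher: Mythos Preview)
Your proposal is correct and follows essentially the same approach as the paper: start from Theorem \ref{resp}, factor the common values $|(\cH_i)_0|$ and $|\cH_{e_i}|$ out of the sum, and then identify the remaining factor $|\G_0|\sum_i(\G_e:\cH_{e_i})$ as $(\G:\cH)$ via Theorem \ref{indice} together with the wideness assumption $|\cH_0|=|\G_0|$. Your added remark that connectedness lets one replace each $(\G_{e_i}:\cH_{e_i})$ by $(\G_e:\cH_{e_i})$ makes explicit a step the paper leaves implicit.
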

\begin{proof}
Using the Theorems \ref{indice} and \ref{resp}, we obtain
\begin{align*}
    |\G|&=|\G_0|\left(\sum_{i=1}^n (\G_{e_i} : \cH_{e_i})|(\cH_{i})_{0}||\cH_{e_i}|\right)\\
    &=
    \left(|\G_0|\sum_{i=1}^n (\G_{e_i} : \cH_{e_i})\right)|(\cH_{i})_{0}||\cH_{e_i}|\\
    &=(\G : \cH)|(\cH_i)_{0}||\cH_{e_i}|.
\end{align*}
\end{proof}

\section{Sylow Theorems}

The purpose of this section is to answer the following question: given a connected groupoid $\G$ with $|\G_0| = k$ and $d_1, d_2, \ldots, d_\ell, m_1, m_2, \ldots, m_\ell \in \mathbb{N}$ with $\sum_i d_i \leq k$ and $m_i$ a divisor of $|\G_e|$ for all $i$, when is there a subgroupoid $\cH$ of $\G$ with order $d_1^2m_1 + d_2^2m_2 + \cdots + d_\ell^2m_\ell$?

During this section we will consider $\G$ a connected groupoid. The disconnected case can be obtained by applying the results of this section to each connected component of the groupoid. Our first result is a consequence of the First Sylow Theorem for groups.

\begin{lemma} \label{lemasylow1}
Consider $e \in \G_0$ and $p$ a prime such that $|\G_e| = p^mb$, with \emph{gcd}$(p,b) = 1$. Then for all $0 \leq n \leq m$ there is a wide connected subgroupoid $\cH$ of $\G$ such that $|\cH| = k^2p^n$. In particular, $|\cH|$ divides $|\G|$.

On the other hand, if $1 \leq d \leq k$, there is a connected subgroupoid $\cH$ of $\G$ such that $|\cH_0| = d$ and $|\cH|= d^2p^n$, for all $0 \leq n \leq m$.
\end{lemma}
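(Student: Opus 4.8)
The plan is to reduce everything to the structure theorem $\G \simeq \cA_k \times \G_e$ together with the classical First Sylow Theorem applied inside the group $\G_e$. The key observation is that a subgroup $K \leq \G_e$ gives rise, for any $1 \leq d \leq k$, to a connected subgroupoid $\cA_d \times K$ of $\cA_k \times \G_e$ (after fixing an inclusion $\cA_d \hookrightarrow \cA_k$ coming from a choice of $d$ of the $k$ identities); when $d = k$ this subgroupoid is wide, and its order is $d^2 |K|$ by Proposition~\ref{propdecsubnaocon}. So the entire lemma is really a statement about which subgroup orders occur in $\G_e$, transported through this decomposition.

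First I would fix the isomorphism $\varphi : \G \to \cA_k \times \G_e$ from Section~2 and identify $\G_0$ with the $k$ identities of $\cA_k$. Since $|\G_e| = p^m b$ with $\gcd(p,b) = 1$, the First Sylow Theorem for groups gives, for each $0 \leq n \leq m$, a subgroup $P_n \leq \G_e$ with $|P_n| = p^n$ (this is the standard consequence of Sylow I: a $p$-Sylow subgroup has order $p^m$ and, being a $p$-group, contains subgroups of every intermediate order $p^n$). Then I would set $\cH := \varphi^{-1}(\cA_k \times P_n)$, i.e. the subgroupoid of $\G$ corresponding under $\varphi$ to the wide coarse piece times $P_n$. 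One checks directly that this is a wide connected subgroupoid, and by Proposition~\ref{propdecsubcon}/\ref{propdecsubnaocon} its order is $k^2 p^n$. The divisibility $|\cH| = k^2 p^n \mid k^2 p^m b = |\G|$ is then immediate, or alternatively follows from Theorem~\ref{lagrange1}(ii) since $k \mid k$ and $p^n \mid |\G_e|$. For the last sentence, given $1 \leq d \leq k$ I would instead take $\cH := \varphi^{-1}(\cA_d \times P_n)$ where $\cA_d$ sits inside $\cA_k$ as the coarse groupoid on a chosen $d$-element subset of the identities; this is connected with $|\cH_0| = d$ and order $d^2 p^n$ by the same propositions.

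The main thing to be careful about — rather than a genuine obstacle — is making the passage "$\cA_d$ is a subgroupoid of $\cA_k$" precise and checking that $\cA_d \times P_n$ really is closed under the partial operation of $\cA_k \times \G_e$ (it is: the operation on $\cA_k$ restricts to any subset of identities, and $P_n$ is a subgroup), and that the resulting object is connected (clear, since $\cA_d$ is connected and the product of connected groupoids along the identity structure here is connected). I would also remark that all choices — of $e$, of the $p$-Sylow subgroup, of the $d$ identities — are harmless because of the basis-independence of the decomposition $\G \simeq \cA_k \times \G_e$ noted in Section~2 and because the isotropy groups of a connected groupoid are mutually isomorphic, so $|\G_e|$ and hence its factorization $p^m b$ does not depend on $e$.
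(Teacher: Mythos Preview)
Your proposal is correct and follows essentially the same route as the paper: use the decomposition $\G \simeq \cA_k \times \G_e$, invoke the First Sylow Theorem in the group $\G_e$ to produce a subgroup $K$ of order $p^n$, and take $\cH = \cA_k \times K$ (respectively $\cA_d \times K$ for the second part), with divisibility coming from Lagrange. The only cosmetic difference is that you explicitly pull back through the isomorphism $\varphi$ and spell out why $\cA_d \times K$ is a connected subgroupoid, whereas the paper works directly in the product and handles the second statement by first passing to $\G' = \cA_d \times \G_e$; the content is identical.
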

\begin{proof}
Since $\G$ is connected, $\G \simeq \cA_k \times \G_e$. We know that $\G_e$ is a group, so we can use the First Sylow Theorem for groups to obtain a subgroup $K$ of $\G_e$ with order $p^n$ for all $0 \leq n \leq m$. 

Consider $\cH = \cA_k \times K$. It is evident that $\cH$ is a wide connected subgroupoid of $\G$. Therefore $|\cH|$ divides $|\G|$ by the Theorem \ref{lagrange}. Besides that, $\cH_e$ is precisely $K$, since $K \subseteq \G_e$. Thus $|\cH_e| = |K| = p^n$.

For the second statement, consider a connected subgroupoid $\G' = \cA_d \times \G_e$ of $\G$ and use the argument above to obtain a wide connected subgroupoid $\cH$ of $\G'$. The result is now direct.
\end{proof}

Once $d$ and $p$ above are fixed, inspired by the group notation, we will call a subgroupoid $\cH$ obtained by $(d,p)$-subgroupoid of $\G$. When $|\cH_e| = p^m$, we denote by $(d,p)$-Sylow subgroupoid. If $d = k$, we will write only $p$-subgroupoid or $p$-Sylow subgroupoid.

The Second Sylow Theorem for groups tells us how the $p$-Sylow subgroups are related. We will now generalize this result for the case of connected groupoids. Before that, we will recall the definition of normal subgroupoid and define characteristic subgroupoid.

\begin{defi}
Let $\G$ be a (not necessarily finite or connected) groupoid and $H$ a wide subgroupoid of $\G$. \begin{itemize} \item[(i)] We say that $\cH$ is \emph{normal} and denote $\cH \triangleleft \G$ if $g^{-1}\cH g \subseteq \cH$ for all $g \in G$. 
\item[(ii)] Define $\cA(\G) = \{ f : \G_e \to \G_{e'} : e,e' \in \G_0 \text{ and } f \text{ is an isomorphism} \}.$ We say that $\cH$ is \emph{characteristic} and denote $\cH \blacktriangleleft \G$ if $\cH$ is invariant under all elements of $\cA(\G)$. That is, if $f(\cH \cap \text{dom}(f)) = \cH \cap \text{Im}(f)$.\end{itemize}
\end{defi}

It is clear that every characteristic subgroupoid is normal. In fact, a subgroupoid is normal if and only if it is invariant under the inner isomorphisms of $\G$, that together are a subset of $\cA(\G)$ \cite[Proposition 5.2]{avila2020notions}. Notice that $\cA(\G) \neq \text{Aut}(\G)$ in general.

\begin{exe}
It is evident that $\G_0 \blacktriangleleft \G$ and $\G \blacktriangleleft \G$. Defining  \[Z(\G) = \{ g \in \text{Iso}(\G) : gh = hg \text{ for all } h \in \G_{d(g)} \},\]the center of $\G$ \cite[Definition 4.1]{avila2020notions}, we have that $Z(\G) \blacktriangleleft \G$. 

In fact, take $f \in \cA(\G)$. Consider $f : \G_e \to \G_{e'}$. Then $Z(\G) \cap \G_e = Z(\G_e)$. Hence, $f(Z(\G) \cap \G_e) = f(Z(\G_e))$. We will show that \[f(Z(\G_e)) = Z(\G_{e'}) = Z(\G) \cap \G_{e'}.\]

Let $g' \in f(Z(\G_e))$. Thus there is $g \in Z(\G_e)$ such that $g' = f(g)$ and $r(g') = e' = d(g')$. Let $h' \in \G_{e'}$. Since $f$ is an isomorphism, there is $h \in \G_e$ such that $f(h') = h$. Hence
\begin{align*}
    g'h' = f(g)f(h) = f(gh) = f(hg) = f(h)f(g) = h'g',
\end{align*}
because $g \in Z(\G_e)$. Thus $g' \in \G_{e'}$ as we wanted. The other inclusion is given similarly since $f$ is an isomorphism.
\end{exe}

A classic result in group theory is that the relation $\triangleleft$ is not transitive, but can be repaired with the relation $\blacktriangleleft$. This result is also true in groupoid theory.

\begin{prop}
Let $\G$ be a groupoid and let $\cH,\cK$ be subgroupoids of $\G$ such that $\cK \blacktriangleleft \cH \triangleleft \G$. Then $\cK \triangleleft \G$.
\end{prop}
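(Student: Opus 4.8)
The plan is to transpose to groupoids the classical fact that a characteristic subgroup of a normal subgroup is normal, the one structural difference being that conjugation by an element $g\in\G$ is not an automorphism of $\G$ but an isomorphism between two isotropy groups --- which is exactly the kind of map that $\cA(\cH)$ is designed to record. First I would settle the wideness condition implicit in ``$\cK\triangleleft\G$'': since $\cK\blacktriangleleft\cH$, the subgroupoid $\cK$ is wide in $\cH$, so $\cK_0=\cH_0$, and since $\cH\triangleleft\G$, the subgroupoid $\cH$ is wide in $\G$, so $\cH_0=\G_0$; hence $\cK_0=\G_0$.

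Next, fix $g\in\G$ and examine the conjugation map $C_g\colon h\mapsto g^{-1}hg$. A short check with the identities $gg^{-1}=r(g)$ and $g^{-1}g=d(g)$ shows that $g^{-1}hg$ is defined precisely when $d(h)=r(h)=r(g)$, i.e. when $h\in\G_{r(g)}$, and that then $g^{-1}hg\in\G_{d(g)}$; moreover $C_g\colon\G_{r(g)}\to\G_{d(g)}$ is a group isomorphism, its inverse being conjugation by $g^{-1}$. Now I would invoke normality of $\cH$: from $g^{-1}\cH g\subseteq\cH$ one gets $C_g(\cH_{r(g)})\subseteq\cH_{d(g)}$, and the same applied with $g^{-1}$ in place of $g$ gives the reverse inclusion, so $C_g$ restricts to an isomorphism of $\cH_{r(g)}$ onto $\cH_{d(g)}$. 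Because $\cH$ is wide we have $r(g),d(g)\in\cH_0$, and therefore this restriction is an element of $\cA(\cH)$.

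It then remains to apply the hypothesis $\cK\blacktriangleleft\cH$ to $f=C_g\in\cA(\cH)$. Since $\text{dom}(f)=\cH_{r(g)}$, $\text{Im}(f)=\cH_{d(g)}$, and $\cK\cap\cH_{r(g)}=\cK_{r(g)}$, $\cK\cap\cH_{d(g)}=\cK_{d(g)}$ (using $\cK\subseteq\cH$), the invariance of $\cK$ under $f$ reads $C_g(\cK_{r(g)})=\cK_{d(g)}$. Finally, every element of $g^{-1}\cK g$ is of the form $g^{-1}kg$ with $k\in\cK$ and the product defined, so the domain analysis above forces $k\in\cK\cap\G_{r(g)}=\cK_{r(g)}$; hence $g^{-1}\cK g=C_g(\cK_{r(g)})=\cK_{d(g)}\subseteq\cK$. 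As $g\in\G$ was arbitrary, $\cK\triangleleft\G$.

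I do not expect a genuine obstacle here, as the classical proof carries over almost word for word. The one point needing care is the bookkeeping forced by partial composition --- one must notice that $g^{-1}\cK g$ only ever involves elements of the isotropy group $\G_{r(g)}$ --- together with the observation that it is precisely the normality of $\cH$ (and the wideness that the definitions build into both $\cK\blacktriangleleft\cH$ and $\cH\triangleleft\G$) which makes the conjugation isomorphism $C_g$ a bona fide element of $\cA(\cH)$, so that the characteristicity of $\cK$ in $\cH$ can actually be applied to it.
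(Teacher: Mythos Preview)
Your argument is correct and follows the same route as the paper's proof: restrict the inner conjugation $C_g$ to the isotropy group at $r(g)$, use $\cH\triangleleft\G$ to see that this restriction lands in $\cA(\cH)$, and then invoke $\cK\blacktriangleleft\cH$ to conclude $g^{-1}\cK g=\cK_{d(g)}\subseteq\cK$. If anything, your write-up is slightly more careful than the paper's --- you verify the wideness of $\cK$ in $\G$ explicitly, and you correctly identify the restricted conjugation as an element of $\cA(\cH)$ (the paper writes $\cA(\G)$ at that spot, which appears to be a slip).
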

\begin{proof}
Consider $I_g : \G_{d(g)} \to \G_{r(g)}$ the partial inner isomorphism of $\G$ given by $I_g(x) = gxg^{-1}$. Since $\cH$ is normal, we have that \[I_g(\cH_{d(g)}) = I_g(\cH \cap \G_{d(g)}) = \cH \cap \G_{r(g)} = \cH_{r(g)}.\]

Hence $I_g|_\cH : \cH_{d(g)} \to \cH_{r(g)}$ is an element of $\cA(\G)$. Since $\cK$ is characteristic, 
\begin{align*}
    I_g(\cK \cap \G_{d(g)}) & = I_g(\cK \cap \cH_{d(g)}) \\
    & = I_g|_\cH(\cK \cap \cH_{d(g)}) \\ 
    & = \cK \cap \cH_{r(g)} = \cK \cap \G_{r(g)}.
\end{align*}

This shows us that $\cK$ is invariant under every partial inner isomorphism of $\G$, that is, $\cK$ is normal.
\end{proof}

\begin{defi}
Denote by $\mathcal{W}_C(\G)$ the set of all wide connected subgroupoids of $\G$ and let $\cH \in \mathcal{W}_C(\G)$. Define
\begin{align*}
    \mathcal{I}_g : \mathcal{W}_C(\G) & \to \mathcal{W}_C(\G) \\
    \cH \simeq \cA_k \times \cH_{d(g)} & \mapsto \cA_k \times g\cH_{d(g)}g^{-1}.
\end{align*}

We will call the wide connected subgroupoid $\mathcal{I}_g(\cH)$ by \emph{isotropic conjugate} of $\cH$. 
\end{defi}

\begin{lemma} \label{lemasylow2}
Let $p$ be a prime, $1 \leq d \leq k$ and $n_{d,p}$ the number of $(d,p)$-Sylow subgroupoids of $\G$.
\begin{enumerate}
    \item[(i)] All $(d,p)$-Sylow subgroupoids of $\G$ are isotropic conjugates. In particular, a $p$-Sylow subgroupoid $\cH$ is such that $\cH \triangleleft \G$ if and only if $n_{k,p} = 1$. In this case, $\cH \blacktriangleleft \G$.
    
    \item[(ii)] If $\cP$ is a $(d,p)$-subgroupoid of $\G$, there is a $(d,p)$-Sylow subgroupoid $\cS$ of $\G$ such that $\cP \subseteq \cS$.
    
    \item[(iii)] If $\cS$ is a $p$-Sylow subgroupoid, we have that $n_{k,p} = (\G_e : N_{\G_e}(\cS_e))$, for any $e \in \G_0$.
\end{enumerate}
\end{lemma}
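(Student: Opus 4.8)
The plan is to transfer the whole lemma to the isotropy group $\G_e$ through the isomorphism $\G\simeq\cA_k\times\G_e$ and then quote the Second and Third Sylow Theorems for groups; each of (i)--(iii) then becomes a short translation, and the only point that needs genuine care is the bookkeeping that makes the translation legitimate.

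First I would record that bookkeeping: after Lemma \ref{lemasylow1}, a $(d,p)$-Sylow subgroupoid of $\G$ is, relative to a fixed identification $\G\simeq\cA_k\times\G_e$, of the form $\cA_d\times P$ with $P$ a Sylow $p$-subgroup of $\G_e$, and in the wide case $d=k$ the assignment $\cH\mapsto\cH_e$ is a bijection between the $p$-Sylow subgroupoids of $\G$ and the Sylow $p$-subgroups of $\G_e$. With this, part (i): given two $p$-Sylow subgroupoids $\cA_k\times P_1$ and $\cA_k\times P_2$, the Second Sylow Theorem for groups produces $x\in\G_e$ with $xP_1x^{-1}=P_2$; reading $x$ as an element of $\G_e\subseteq\G$ (so $d(x)=r(x)=e$) gives $\mathcal{I}_x(\cA_k\times P_1)=\cA_k\times xP_1x^{-1}=\cA_k\times P_2$, so the two are isotropic conjugates. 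The case $1\le d\le k$ reduces to this one by passing to the connected subgroupoid $\G|_S:=\{g\in\G:d(g),r(g)\in S\}\simeq\cA_d\times\G_e$, where $S=\cH_0$, inside which both $(d,p)$-Sylow subgroupoids are wide. For the normality criterion, for $\cH=\cA_k\times P$ and an arbitrary $g\in\G$ the inclusion $g^{-1}\cH g\subseteq\cH$ collapses---the coarse factor always closes up---to a conjugation taking place inside $\G_e$, so $\cH\triangleleft\G$ iff $P\triangleleft\G_e$ iff $\G_e$ has a unique Sylow $p$-subgroup iff $n_{k,p}=1$. When this holds, $P$ is moreover characteristic in $\G_e$, so any isomorphism $\G_e\to\G_{e'}$ carries $P$ onto the unique Sylow $p$-subgroup of $\G_{e'}$, which is the isotropy of $\cH$ at $e'$; hence $\cH$ is stable under all of $\cA(\G)$, i.e. $\cH\blacktriangleleft\G$.

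Part (ii) follows from the classical fact that every $p$-subgroup of a finite group lies in a Sylow $p$-subgroup: if $\cP$ is a $(d,p)$-subgroupoid with $\cP_0=S$ and isotropy $K=\cP_e$, pick a Sylow $p$-subgroup $P\supseteq K$ of $\G_e$ and set $\cS=\cA_d\times P$ inside $\G|_S$; then $\cS$ is a $(d,p)$-Sylow subgroupoid of $\G$ with $\cP\subseteq\cS$. Part (iii) again uses the bijection: $n_{k,p}$ equals the number of Sylow $p$-subgroups of $\G_e$, which by the Third Sylow Theorem for groups is $(\G_e:N_{\G_e}(P))$ for any Sylow $p$-subgroup $P$, in particular $P=\cS_e$; and since $\G$ is connected all isotropy groups are isomorphic, so this index is independent of the chosen identity, giving $n_{k,p}=(\G_e:N_{\G_e}(\cS_e))$ for every $e\in\G_0$.

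The step I expect to be the real obstacle is precisely that bookkeeping claim: one must verify that a $p$-Sylow subgroupoid produced by the construction of Lemma \ref{lemasylow1} can be kept in the product form $\cA_k\times P$ compatibly enough that ``$g^{-1}\cH g\subseteq\cH$'' genuinely reduces to a statement inside $\G_e$, and that the correspondence $\cH\mapsto\cH_e$ on such subgroupoids is injective. Once that is pinned down, (i)--(iii) are dictionary translations of the classical Sylow theorems for $\G_e$, together with the remark that connectedness makes all the isotropy groups interchangeable.
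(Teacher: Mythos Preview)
Your approach is essentially the paper's: push everything down to the isotropy group $\G_e$ and invoke the classical Sylow theorems. Parts (ii) and (iii), and the normality/characteristic discussion in (i), match the paper's argument almost line for line.

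There is one genuine gap in your treatment of (i) for $d<k$. You reduce to the wide case by restricting to $\G|_S$ with $S=\cH_0$, ``inside which both $(d,p)$-Sylow subgroupoids are wide.'' But two $(d,p)$-Sylow subgroupoids $\cH$ and $\cK$ need not share the same identity set: if $\cH_0\neq\cK_0$ then $\cK$ is not contained in $\G|_{\cH_0}$ at all, and your reduction does not apply. The paper handles this directly: it picks $e\in\cH_0$ and $f\in\cK_0$, uses connectedness of $\G$ to choose $x\in\G$ with $d(x)=e$, $r(x)=f$, observes that $x\cH_ex^{-1}$ is a Sylow $p$-subgroup of $\G_f$, and then conjugates inside $\G_f$ by some $y$ to reach $\cK_f$. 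The conjugating element $g=yx$ is allowed to move between distinct identities, which is exactly what your restriction to $\G|_S$ suppresses. The fix is easy---compose your $\G_e$-conjugation with such a transport element---but as written the reduction is incomplete.

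On the bookkeeping you flag at the end: the paper never actually fixes a single global product form $\cA_k\times P$ valid for all $p$-Sylow subgroupoids simultaneously (and indeed one cannot, since a wide connected subgroupoid need not equal $\cA_k\times\cH_e$ under a transversal chosen independently of $\cH$). Instead it argues entirely through the isotropy groups $\cH_e$ and interprets ``$\cH,\cK$ are isotropic conjugates'' as ``there exists $g\in\G$ with $g\cH_eg^{-1}=\cK_f$.'' For the normality direction, for instance, it simply observes that $g^{-1}\cH g=g^{-1}\cH_{r(g)}g$ is a Sylow $p$-subgroup of $\G_{d(g)}$, which must coincide with $\cH_{d(g)}$ when $\cH$ is the unique $p$-Sylow subgroupoid---no global product decomposition required. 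So the obstacle you anticipate is sidestepped rather than confronted.
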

\begin{proof}

(i): The statement is equivalent to show that given $\cH, \cK$ $(d,p)$-Sylow subgroupoids of $\G$, $e \in \cH_0$ and $f \in \cK_0$, we have that there is $g \in \G$ such that $g\cH_eg^{-1} = \cK_f$. 

Since $\G$ is connected, there is $x \in \G$ with $r(x) = f$, $d(x) = e$. We already know that $\G_e \simeq \G_f$ via $I_x$. Thus $I_x(\cH_e) = x\cH_ex^{-1}$ is a $p$-Sylow subgroup of $\G_f$. Since all $p$-Sylow subgroups of a group are conjugates, there is $y \in \G_f$ such that $yx\cH_ex^{-1}y^{-1} = \cK_f$. Therefore $g = yx$ is the element that we wanted. 

On the other hand, let $\cH$ be a $(d,p)$-Sylow subgroupoid. It is obvious that $I_g(\cH_e) = g^{-1}\cH_eg$ is a $p$-Sylow subgroup of $\G_f$. Therefore we can construct a suitable $(d,p)$-Sylow subgroupoid $\cK = \cA_d \times g^{-1}\cH_eg$ such that $\cK_f = g^{-1}\cH_eg$.

Now, notice that $\cH \triangleleft \G$ if and only if $g^{-1}\cH g \subseteq \cH$, for all $g \in G$. We have that $g^{-1}\cH g = g^{-1}\cH_{r(g)}g$ is precisely an isotropy group for some $p$-Sylow subgroupoid of $\G$. Since $\cH$ is unique, we have that $g^{-1}\cH_{r(g)}g = \cH_{d(g)} \subseteq \cH$ proving that $\cH$ is normal. For the converse, notice that if $\cH$ is normal then $g^{-1}\cH_{r(g)}g = \cH_{d(g)}$ for all $g \in \G$. But this implies that every $p$-Sylow subgroupoid of $\G$ has the exact same elements as $\cH$. Thus, $\cH$ must be unique.

In fact, we have proved even more: a $p$-Sylow subgroupoid $\cH$ is normal in $\G$ if and only if every $p$-Sylow subgrpup $\cH_e$ is normal in $\G_e$ for all $e \in \G_0$ if and only if every $p$-Sylow subgroup $\cH_e$ is characteristic in $\G_e$ for all $e \in \G_0$. The last equivalence follows from the Second Sylow Theorem for groups. Since the definition of characteristic subgroupoid depends only on the isotropy subgroups, it follows that $\cH \blacktriangleleft \G$.

(ii): If $\cP$ is a $(d,p)$-subgroupoid of $\G$, then $\cP_e$ is a $p$-subgroup of $\G_e$, for all $e \in \cP_0$. Hence there is $\cS_e \subseteq \G_e$ $p$-Sylow subgroup for all $e \in \cP_0$. Now take $\cS = \cA_d \times \cS_e$.

(iii): By the Second Sylow Theorem for groups we have that $\G_e$ has $(\G_e : N_{\G_e}(\cS_e))$ $p$-Sylow subgroups. The result follows directly from (i).
\end{proof}

\begin{exe} \label{exe:subgrupoides}
Let $\G= \cA_3 \times D_3$, where $D_3 = \{1, \rho, \rho^2, \tau_1, \tau_2, \tau_3\}$ is the dihedral group of degree 3. We know that $|\G| = 3^2 \cdot 6 = 54$. There are three 2-Sylow subgroups given by $\langle \tau_i \rangle$, $1 \leq i \leq 3,$ and only one 3-Sylow subgroup given by $\langle \rho \rangle$. Since $|\G_0| = 3$, we have the options $d=1$, $d=2$ or $d=3$.

For $d=1$, take $\cA_1 \times H \simeq H$, where $H$ is a Sylow subgroup of $D_3$. We have that $n_{1,3} = 3 \cdot 1$ and $n_{1,2} = 3 \cdot 3$, given the choice of the identity $e_i \in G$ and of the Sylow subgroup of $D_3$. The $(1,3)$-Sylow subgroupoids of $G$ are
\[\{e_1\}\times \langle \rho \rangle, \{e_2\}\times \langle \rho \rangle \textrm{ and } \{e_3\}\times \langle \rho \rangle.\]

For $d=2$, taking $\cA_2 \times H$ we obtain $n_{2,3} = 3 \cdot 1 = 3$ and $n_{2,2} = 3 \cdot 3$, since we have three distinct but isomorphic coarse groupoids $\cA_2$ in $\cA_3$. Denoting by $\cA_{2}^{ij}$ the coarse subgroupoid of $\cA_3$ that contains $e_i, e_j$, it follows that the three $(2,3)$-Sylow subgroupoids of $\G$ are isotropic conjugates of the form
\[\cA_{2}^{ij} \times \langle \rho \rangle,\] and have order 12. The nine $(2,2)$-Sylow subgroupoids, of the form $\cA_{2}^{ij} \times \langle \tau_k \rangle$ have order 8. Observe that in this case the orders of the subgroupoids do not divide the order of $G$.

Taking $d=3$, we have the wide Sylow subgroupoids of $\G$. Thus, $n_{3,3} = 1$ and $n_{3,2} = 3$. Now observe that the $(3,3)$-Sylow subgroupoid $\cA_3 \times \langle \rho \rangle$ is unique and therefore normal. On the other hand, the $(2,3)$-Sylow subgroupoids $\cA_3 \times \langle \tau_i \rangle$ are not normal since they are isotropic conjugated.
\end{exe}

The next lemma will give us more information about the number $n_{d,p}$.

\begin{lemma} \label{lemasylow3}
Let $p$ be a prime and let $e \in \G_0$ be such that $|\G_e| = p^mb$ with \emph{gcd}$(p,b)=1$. Then 
\[n_{d,p} = N\cdot \binom{k}{d},\] where $N$ is such that
\[\begin{cases}
    N \textrm{ divides } b, \\
    N \equiv 1 \mod p.
\end{cases}\]
\end{lemma}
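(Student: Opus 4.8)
The plan is to count the $(d,p)$-Sylow subgroupoids of $\G$ by unwinding their construction from Lemma \ref{lemasylow1} through the decomposition $\G \simeq \cA_k \times \G_e$, and then to read off the arithmetic of the count from the Third Sylow Theorem for groups applied to the group $\G_e$. Concretely, I claim that the $(d,p)$-Sylow subgroupoids of $\G$ are exactly the subsets of the form $\cA_d^S \times P$, where $S \subseteq \G_0$ has $|S| = d$ (and $\cA_d^S$ denotes, as in Example \ref{exe:subgrupoides}, the coarse subgroupoid of $\cA_k$ on $S$) and $P$ is a $p$-Sylow subgroup of $\G_e$. Granting this, the data $S$ and $P$ can be chosen independently and recovered from the subgroupoid ($S$ is its set of identities, $P$ its isotropy group at any identity in $S$), so
\begin{align*}
    n_{d,p} = \binom{k}{d}\cdot\#\{\,P\le\G_e : P \text{ is a } p\text{-Sylow subgroup}\,\}.
\end{align*}

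To justify the claim I would argue both inclusions. If $\cH$ is a $(d,p)$-Sylow subgroupoid then, by construction (Lemma \ref{lemasylow1}) together with Proposition \ref{propdecsubcon}, it is obtained inside a connected subgroupoid $\cA_d \times \G_e$ of $\G$ — that is, on a fixed $d$-element identity set $S = \cH_0$ — by replacing the isotropy factor $\G_e$ with a $p$-Sylow subgroup $P$, which under $\G \simeq \cA_k \times \G_e$ is precisely $\cA_d^S \times P$. Conversely, for any $d$-subset $S$ and any $p$-Sylow subgroup $P \le \G_e$, the set $\cA_d^S \times P$ is a wide connected subgroupoid of $\cA_d^S \times \G_e$ with all isotropy groups $p$-Sylow and with order $d^2 p^m$, hence a $(d,p)$-Sylow subgroupoid. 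That distinct pairs $(S,P)$ give distinct subgroupoids is clear from the recovery statements above; for $d = k$ the formula reduces to $n_{k,p} = \#\{p\text{-Sylow subgroups of }\G_e\}$, matching Lemma \ref{lemasylow2}(iii).

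It then remains to set $N := \#\{P\le\G_e : P \text{ is a } p\text{-Sylow subgroup}\}$ and invoke the Third Sylow Theorem for $\G_e$: one has $N \equiv 1 \pmod p$, and $N = (\G_e : N_{\G_e}(P))$ divides $(\G_e : P) = b$ since $P \le N_{\G_e}(P) \le \G_e$. This yields $n_{d,p} = N\binom{k}{d}$ with $N \mid b$ and $N \equiv 1 \pmod p$. I expect the main obstacle to be the bijection in the middle paragraph: one must check carefully that the definition of a $(d,p)$-Sylow subgroupoid really does pin down the ``rectangular'' shape $\cA_d^S \times P$ — as opposed to a connected subgroupoid of the same order with twisted or varying isotropy groups — and that no two choices of $(S,P)$ collapse; once that is in place, the Sylow-theoretic conclusion is immediate.
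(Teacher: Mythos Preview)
Your proposal is correct and follows essentially the same approach as the paper: identify the $(d,p)$-Sylow subgroupoids with pairs $(\cA_d^S,P)$ where $S$ is a $d$-subset of $\G_0$ and $P$ is a $p$-Sylow subgroup of $\G_e$, count the former by $\binom{k}{d}$ and set $N$ equal to the number of $p$-Sylow subgroups of $\G_e$, then invoke the classical Third Sylow Theorem for $N\mid b$ and $N\equiv 1\pmod p$. Your write-up is in fact more careful than the paper's, which leaves the bijection as an assertion and a ``counting exercise''.
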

\begin{proof}
Consider $N=n_p$ the number of $p$-Sylow subgroups of $\G_e$. By the Third Sylow Theorem for groups, $N | b$ and $N\equiv 1 \mod p$.

Notice that the Sylow subgroupoids of $\G$ are of the form $\cA_d \times H$ where $H$ is a $p$-Sylow subgroup of $\G_e$ and $\cA_d \subset \cA_k$. By a counting exercise we can see that there are $\binom{k}{d}$ isomorphic copies of $\cA_d$.
\end{proof}

We can now state the Sylow theorems for connected groupoids.

\begin{theorem}[First Sylow Theorem]
Let $p_1, p_2, \ldots , p_\ell$ be primes such that $|\G_e| = p_i^{m_i}b_i$, with \emph{gcd}$(p_i,b_i) = 1$, for all $1 \leq i \leq \ell$. Then for all $0 \leq n_i \leq m_i$ and $1 \leq d_1, d_2, \ldots, d_\ell \leq k$ with $d = \sum_i d_i \leq k$ there is a subgroupoid $\cH$ of $\G$ such that $|\cH_0| = d$ and $$|\cH|= \sum_{i=1}^{\ell} d_i^2p_i^{n_i}.$$
\end{theorem}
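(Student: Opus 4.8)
The plan is to build $\cH$ as a disjoint union of $\ell$ connected subgroupoids, one for each prime $p_i$, with the $i$-th piece contributing $d_i^2 p_i^{n_i}$ to the order. Since $\G$ is connected, $\G \simeq \cA_k \times \G_e$ for any fixed $e \in \G_0$. First I would invoke Lemma \ref{lemasylow1} (its second statement) for each $i$: for the given $n_i$ with $0 \le n_i \le m_i$ and the given $d_i$ with $1 \le d_i \le k$, there is a connected subgroupoid $\cK_i$ of $\G$ with $|(\cK_i)_0| = d_i$ and $|\cK_i| = d_i^2 p_i^{n_i}$; concretely $\cK_i \simeq \cA_{d_i} \times H_i$ where $H_i$ is a subgroup of $\G_e$ of order $p_i^{n_i}$ obtained from the First Sylow Theorem for groups.

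The key step is to choose the $\cK_i$ so that they are pairwise disjoint, which is needed both for $\cH := \bigcup_{i=1}^{\ell} \cK_i$ to be a subgroupoid and for the orders to add. Because $\sum_i d_i \le k$, we can partition (a subset of) $\G_0$ into blocks $B_1, \ldots, B_\ell$ with $|B_i| = d_i$, and take $\cK_i$ to be the connected subgroupoid supported on the identities in $B_i$, namely $\cK_i \simeq \cA_{B_i} \times H_i$ sitting inside $\G$ via the identification $\G \simeq \cA_k \times \G_e$ (here $\cA_{B_i}$ denotes the coarse subgroupoid on the vertex set $B_i$). Since the $B_i$ are disjoint, an element of $\cK_i$ has its domain and range in $B_i$, so no element lies in two different $\cK_i$'s; in particular $(\cK_i)_0 \cap (\cK_j)_0 = \emptyset$ for $i \ne j$, and there is no composable pair straddling two blocks, so $\cH$ is closed under the (partially defined) operation and inversion. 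Hence $\cH$ is a subgroupoid whose connected components are exactly the $\cK_i$.

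It then remains to compute the order. Its identity set is $\cH_0 = \bigcup_{i=1}^{\ell} B_i$, so $|\cH_0| = \sum_i d_i = d$. By Proposition \ref{propdecsubnaocon} (or simply by disjointness of the components together with Proposition \ref{propdecsubcon}),
\begin{align*}
    |\cH| = \sum_{i=1}^{\ell} |\cK_i| = \sum_{i=1}^{\ell} d_i^2 \cdot |H_i| = \sum_{i=1}^{\ell} d_i^2 p_i^{n_i},
\end{align*}
which is what we wanted. I do not expect a genuine obstacle here: the only thing to be careful about is the bookkeeping that the chosen blocks are disjoint and that $\cA_{B_i} \times H_i$ really does embed as a subgroupoid of $\G$ under the isomorphism $\G \simeq \cA_k \times \G_e$ — both of which are immediate once one writes down the coarse-groupoid description. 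The primes $p_i$ need not be distinct, and the construction does not care, since each piece is handled independently inside its own block of identities.
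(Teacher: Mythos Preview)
Your argument is correct and follows exactly the paper's approach: build each connected $(d_i,p_i)$-subgroupoid via Lemma~\ref{lemasylow1} and take their disjoint union. You even make explicit what the paper leaves implicit, namely that the condition $\sum_i d_i \le k$ lets one place the pieces on pairwise disjoint blocks of identities so that the union is genuinely disjoint and hence a subgroupoid.
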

\begin{proof}
Just take $\cH_i$ as the connected $(d_i,p_i)$-subgroupoid of $\G$ as in Lemma \ref{lemasylow1} for all $1 \leq i \leq \ell$. Defining $\cH = \bigcup\limits^\cdot_i \cH_i$ we obtain the result.
\end{proof}

From now on, fix $D = (d_1, \ldots, d_\ell), P = (p_1, \ldots, p_\ell) \in \mathbb{N}^{\ell}$ where $\sum d_i \leq k$ and each $p_i$ is a prime as in the previous lemma. The subgroupoid above will be denoted by $(D,P)$-subgroupoid of $\G$. If $n_i = m_i$ for all $1 \leq i \leq \ell$, we will write $(D,P)$-Sylow subgroupoid. 

\begin{defi}
Let $\cH$ be a wide $(D,P)$-Sylow subgroupoid of $\G$, that is, $k = \sum_i d_i$. Consider $\cH_i$ the $(d_i,p_i)$-Sylow subgroupoid of $\G$ that is a connected component of $\cH$. A \emph{connected components permutation} of $\cH$ is a subgroupoid $\cK$ of $\G$ such that $\cK \simeq \cH$ and, denoting by $\cK_i$ the connected component isomorphic to $\cH_i$ for all $1 \leq i \leq \ell$, there is at least one $i$ such that $(\cK_i)_0 \neq (\cH_i)_0$.
\end{defi}

\begin{exe}
As in Example \ref{exe:subgrupoides}, let $G=\cA_3\times D_3$. Consider the disjoint union of the subgroupoids $\cH=\{e_1\} \times \langle \rho \rangle$ and $\cK=\{e_2, e_3, g_{32}, g_{23}\} \times \langle \tau_1 \rangle$.

\begin{figure}[H]
    \centering
    \begin{tikzcd}
        e_1 \arrow[distance=2em, in=125, out=55]{}[above]{\langle \rho \rangle} & e_3 \arrow[distance=2em, in=125, out=55]{}[above]{\langle \tau_1 \rangle} \arrow[d] \\
         & e_2 
    \end{tikzcd}
\end{figure}

A connected components permutation of $\cH \cup \cK$ is the disjoint union $\cH' \cup \cK'$, where $\cH'= \{e_3\} \times \langle \rho \rangle \simeq \cH$ and $\cK'=\{e_1, e_2, g_{12}, g_{21}\} \times \langle \tau_1 \rangle \simeq K$.
\begin{figure}[H]
    \centering
    \begin{tikzcd}
        e_1 \arrow{d} \arrow[distance=2em, in=125, out=55]{}[above]{\langle \tau_1 \rangle}  & e_3 \arrow[distance=2em, in=125, out=55]{}[above]{\langle \rho \rangle}  \\
        e_2 & 
    \end{tikzcd}
\end{figure}

Let $\cK'' = \{e_1, e_2, g_{12}, g_{21}\} \times \langle \tau_2 \rangle \simeq \cK \simeq \cK'$. We will \emph{not} consider the subgroupoid $\cH' \cup \cK''$
\begin{figure}[H]
    \centering
    \begin{tikzcd}
        e_1 \arrow{d} \arrow[distance=2em, in=125, out=55]{}[above]{\langle \tau_2 \rangle}  & e_3 \arrow[distance=2em, in=125, out=55]{}[above]{\langle \rho \rangle}  \\
        e_2 & 
    \end{tikzcd}
\end{figure}
\noindent as a connected components permutation of $\cH \cup \cK$, even that $\cH \cup \cK \simeq \cH' \cup \cK''$, because the isotropy group is not the same.
\end{exe}

The following lemma is a counting exercise.

\begin{lemma}
Given $\cH$ a wide $(D,P)$-Sylow subgroupoid of $\G$, there are
\begin{align*}
    \binom{k}{D} := \frac{k!}{d_1!d_2! \cdots d_\ell!}
\end{align*}
connected components permutations of $\cH$. 
\end{lemma}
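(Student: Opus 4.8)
The plan is to exhibit a bijection between the connected components permutations of $\cH$ and the ordered partitions of $\G_0$ into blocks of prescribed sizes $d_1,\dots,d_\ell$, and then count the latter by the standard multinomial argument.

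First I would fix $e\in\G_0$ and use the decomposition $\G\simeq\cA_k\times\G_e$ recalled in Section~2. Under this identification, Proposition~\ref{propdecsubcon} says that every connected subgroupoid of $\G$ has the form $\cA_S\times H$ for a uniquely determined identity set $S\subseteq\G_0$ and a uniquely determined subgroup $H\leq\G_e$ (its isotropy group). In particular, writing $\cH_i=\cA_{S_i}\times H_i$ with $S_i=(\cH_i)_0$ and $H_i=(\cH_i)_{e}$ a $p_i$-Sylow subgroup of $\G_e$, the fact that $\cH$ is a wide $(D,P)$-Sylow subgroupoid means exactly that $(S_1,\dots,S_\ell)$ is an ordered partition of $\G_0$ with $|S_i|=d_i$.

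Next I would show that $\cK\mapsto\big((\cK_1)_0,\dots,(\cK_\ell)_0\big)$ is a bijection from the set of connected components permutations of $\cH$ onto the set of ordered partitions $(T_1,\dots,T_\ell)$ of $\G_0$ with $|T_i|=d_i$. For well-definedness and injectivity: by the definition of a connected components permutation, $\cK_i\simeq\cH_i$ and the isotropy group of $\cK_i$ coincides with that of $\cH_i$ — this is precisely the restriction illustrated in the example just before the lemma, where the isotropy subgroup is not allowed to change — so $\cK_i=\cA_{(\cK_i)_0}\times H_i$, and since the $\cK_i$ are the connected components of $\cK$ the tuple of their identity sets recovers $\cK$ entirely. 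For surjectivity: given an ordered partition $(T_1,\dots,T_\ell)$ of $\G_0$ with $|T_i|=d_i$, set $\cK=\bigcup^{\cdot}_{i}(\cA_{T_i}\times H_i)$; the $T_i$ being pairwise disjoint, this is a disjoint union of connected subgroupoids, hence a wide subgroupoid of $\G$ whose $i$-th connected component is $\cA_{T_i}\times H_i\simeq\cA_{d_i}\times H_i\simeq\cH_i$, so $\cK$ is a connected components permutation of $\cH$ (with the convention, consistent with the stated count, that $\cH$ is listed among its own connected components permutations).

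Finally, counting the ordered partitions on the right-hand side is the announced counting exercise: choosing $T_1\subseteq\G_0$, then $T_2\subseteq\G_0\setminus T_1$, and so on telescopes to
\begin{align*}
\binom{k}{d_1}\binom{k-d_1}{d_2}\cdots\binom{k-d_1-\cdots-d_{\ell-1}}{d_\ell}=\frac{k!}{d_1!\,d_2!\cdots d_\ell!}=\binom{k}{D}.
\end{align*}
There is no deep obstacle here; the only point requiring care — and the only place the connectedness of $\G$ is used — is the identification in the first two paragraphs that a connected components permutation is nothing more than a redistribution of the $k$ identities of $\G_0$ among the $\ell$ components $\cA_{\bullet}\times H_i$ with the isotropy groups $H_i$ held fixed. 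Once this is pinned down through $\G\simeq\cA_k\times\G_e$ and Proposition~\ref{propdecsubcon}, the remaining count is purely combinatorial.
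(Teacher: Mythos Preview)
Your proposal is correct and is precisely the ``counting exercise'' the paper alludes to but does not write out: identify connected components permutations with ordered partitions of $\G_0$ into blocks of sizes $d_1,\dots,d_\ell$ and count the latter by the multinomial formula. Your two parenthetical accommodations --- reading ``same isotropy group'' from the example so that $\cK_i=\cA_{(\cK_i)_0}\times H_i$, and including $\cH$ itself among its permutations to match the stated count --- are appropriate resolutions of the paper's loose definition rather than gaps in your argument.
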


With this notation we can state the Second Sylow Theorem.

\begin{theorem}[Second Sylow Theorem]
Let $p_1, \ldots, p_\ell$ be any primes, $1 \leq d_1, \ldots, d_\ell \leq k$ and $n_{D,P}$ be the number of $(D,P)$-Sylow subgroupoids of $\G$.
\begin{enumerate}
    \item[(i)] Every two $(D,P)$-Sylow subgroupoids of $\G$ are isotropic conjugates besides connected components permutations. In particular, a $(D,P)$-Sylow subgroupoid $\cH$ of $\G$ is normal if and only if $n_{D,P} = \binom{k}{D}$. In this case $\cH$ and all of its connected components permutations are characteristic.
    
    \item[(ii)] If $\cP$ is a $(D,P)$-subgroupoid of $\G$, there is a $(D,P)$-Sylow subgroupoid of $\G$ such that $\cP \subseteq \cS$.
    
    \item[(iii)] If $\cS$ is a $(D,P)$-Sylow subgroupoid, we have that \[n_{D,P} = \binom{k}{D}\prod_i (\G_{e_i} : N_{\G_{e_i}}(\cS_{e_i})),\]
    where the $\cS_{e_i}$ are the isotropy groups of each connected component $\cS_i$ of $\cS$ for some $e_i \in (\cS_i)_0$.
\end{enumerate}
\end{theorem}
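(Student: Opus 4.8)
The plan is to reduce the Second Sylow Theorem for connected groupoids to the corresponding theorem for groups, exactly in the spirit of the earlier lemmas (Lemmas \ref{lemasylow1}, \ref{lemasylow2}, \ref{lemasylow3}), using the decomposition $\G \simeq \cA_k \times \G_e$ and the fact that a $(D,P)$-Sylow subgroupoid $\cS$ is a disjoint union $\cS = \dot\bigcup_{i=1}^\ell \cS_i$ of connected components $\cS_i \simeq \cA_{d_i} \times \cS_{e_i}$, where each $\cS_{e_i}$ is an honest $p_i$-Sylow subgroup of $\G_{e_i} \simeq \G_e$, and the identity sets $(\cS_i)_0$ partition $\G_0$.

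First I would prove (i). Given two wide $(D,P)$-Sylow subgroupoids $\cS$ and $\cS'$, their connected components $\cS_i$ and $\cS'_i$ correspond (up to relabeling) to $p_i$-Sylow data. The partitions $\{(\cS_i)_0\}$ and $\{(\cS'_i)_0\}$ of $\G_0$ into blocks of sizes $d_1, \dots, d_\ell$ differ by a connected components permutation; after applying such a permutation we may assume $(\cS_i)_0 = (\cS'_i)_0$ for all $i$. Then, on each block, we are comparing two $(d_i, p_i)$-Sylow subgroupoids with the same identity set, so Lemma \ref{lemasylow2}(i) (applied componentwise) gives an isotropic conjugation taking one to the other. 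For the normality criterion: $\cH$ is normal iff $g^{-1}\cH g \subseteq \cH$ for all $g$; as in the proof of Lemma \ref{lemasylow2}(i), this forces each isotropy $p_i$-Sylow subgroup to be normal, hence unique, in each $\G_{e}$, i.e. $n_{k,p_i}=1$ componentwise; combined with the $\binom{k}{D}$ ways to distribute the identities among the components (the previous lemma), this gives $n_{D,P} = \binom{k}{D}$, and conversely. The characteristic conclusion follows since a normal $p_i$-Sylow subgroup of a group is characteristic (Second Sylow Theorem for groups) and characteristicity of a subgroupoid depends only on its isotropy subgroups.

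Next, (ii) is immediate: if $\cP$ is a $(D,P)$-subgroupoid, write $\cP = \dot\bigcup_i \cP_i$ with $\cP_i$ a $(d_i,p_i)$-subgroupoid; apply Lemma \ref{lemasylow2}(ii) to each $\cP_i$ to get a $(d_i,p_i)$-Sylow subgroupoid $\cS_i \supseteq \cP_i$ sharing the same identity block, and set $\cS = \dot\bigcup_i \cS_i$. For (iii), the number of $(D,P)$-Sylow subgroupoids is the number of ways to choose an ordered partition of $\G_0$ into blocks of sizes $d_1,\dots,d_\ell$, namely $\binom{k}{D}$, times, for each block, the number of $p_i$-Sylow subgroups of the isotropy group, which by the Third Sylow Theorem for groups (as in Lemma \ref{lemasylow3}) is $(\G_{e_i} : N_{\G_{e_i}}(\cS_{e_i}))$; multiplying over $i$ gives the stated formula.

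The main obstacle — and the only genuinely non-routine point — is getting the bookkeeping in (i) and (iii) exactly right: one must be careful that ``isotropic conjugate'' acts within a fixed identity-partition while ``connected components permutation'' moves between partitions, so that the set of all $(D,P)$-Sylow subgroupoids is precisely the disjoint union, over the $\binom{k}{D}$ partitions, of single isotropic-conjugacy orbits, with no double counting. Once this orbit decomposition is pinned down, the count in (iii) and the normality characterization in (i) both drop out of the group-level Sylow theorems applied componentwise. The remaining steps are the same ``counting exercise'' arguments already used repeatedly in this section.
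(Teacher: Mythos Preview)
Your proposal is correct and follows essentially the same approach as the paper: reduce componentwise to Lemma~\ref{lemasylow2} by viewing each connected component $\cS_i$ as a $(d_i,p_i)$-Sylow subgroupoid of the auxiliary connected groupoid $\G_i = \cA_{d_i} \times \G_e$ sitting on the identity block $(\cS_i)_0$, then combine with the multinomial count $\binom{k}{D}$ of identity partitions. The only minor slip is that the normalizer-index formula for the number of $p_i$-Sylow subgroups in (iii) comes from Lemma~\ref{lemasylow2}(iii) (i.e.\ the Second Sylow Theorem for groups), not Lemma~\ref{lemasylow3}.
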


\begin{proof}
(i): Let $\cH_1, \ldots, \cH_\ell$ be the connected components of $\cH$. Define, for all $1 \leq i \leq \ell$, $\G_i = \cA_{d_i} \times \G_e$, where $e \in (\cH_i)_0$ and $\cA_{d_i} \subseteq \cA_k$ is the coarse groupoid whose identities are exactly the same as those of $\cH_i$. We have that $\cH_i$ is a connected $p_i$-Sylow subgroupoid of $\G_i$. Hence we can use Lemma \ref{lemasylow2} to obtain that every other connected $p_i$-Sylow subgroupoid of $\G_i$ is an isotropic conjugate of $\cH_i$. Thefore all $(D,P)$-Sylow subgroupoids of $\G$ are isotropic conjugates besides connected components permutations. 

Notice that when $\cH$ is normal, each $\cH_e$ is normal in $\G_e$. But that is the same as saying that, once a connected components permutation is fixed, there is an unique $(D,P)$-Sylow subgroupoid of $\G$. Hence, if $n_{D,P}$ is exactly the number of connected components permutations of $\cH$, every connected components permutations of $\cH$ is normal in $\G$. The statement about characteristic subgroupoids follows directly.

(ii) is evident. We will now prove (iii). Notice that
\begin{align*}
    n_{D,P} = \binom{k}{D}\prod_{i=1}^\ell n_{d_i,p_i},
\end{align*}
where $n_{d_i,p_i} = (\G_{e_i} : N_{\G_{e_i}}(\cS_{e_i}))$ is the number of connected $p_i$-Sylow subgroupoids of $\G_i$.
\end{proof}

Finally, the Third Sylow Theorem gives us a system of congruences involving $ n_{D,P}$. 

\begin{theorem}[Third Sylow Theorem] \label{teosylow3}
Let $p_1, \ldots, p_\ell$ be primes and let $e \in \G_0$ be such that $|\G_{e}| = p_i^{m_i}b_i$ with \emph{gcd}$(p_i,b_i)=1$. Then 
\[n_{D,P} = \left ( \prod_{i=1}^\ell N_i \right ) \cdot \binom{k}{D},\] where each $N_i$ is such that
\[\begin{cases}
    N_i \textrm{ divides } b_i, \\
    N_i \equiv 1 \mod p_i.
\end{cases}\]
\end{theorem}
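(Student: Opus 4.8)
The plan is to reduce the whole statement to the classical (group-theoretic) Third Sylow Theorem, using the counting formula already extracted in the proof of the Second Sylow Theorem for groupoids, part (iii). Fix a $(D,P)$-Sylow subgroupoid $\cS$ of $\G$, with connected components $\cS_1,\dots,\cS_\ell$, and for each $i$ choose $e_i\in(\cS_i)_0$ so that $\cS_{e_i}$ is a $p_i$-Sylow subgroup of the isotropy group $\G_{e_i}$. Part (iii) of the Second Sylow Theorem gives
\[
n_{D,P}=\binom{k}{D}\prod_{i=1}^{\ell}\bigl(\G_{e_i}:N_{\G_{e_i}}(\cS_{e_i})\bigr),
\]
so it remains only to analyze each factor $N_i:=\bigl(\G_{e_i}:N_{\G_{e_i}}(\cS_{e_i})\bigr)$, which is precisely the number of $p_i$-Sylow subgroups of the group $\G_{e_i}$.

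Next I would invoke connectedness of $\G$: all isotropy groups of a connected groupoid are isomorphic, so $\G_{e_i}\simeq\G_e$ for every $i$, and in particular $|\G_{e_i}|=|\G_e|=p_i^{m_i}b_i$ with $\mathrm{gcd}(p_i,b_i)=1$. Hence each $N_i$ depends only on the prime $p_i$ (not on the component nor on the chosen representative $e_i$), and it counts the $p_i$-Sylow subgroups of a group of order $p_i^{m_i}b_i$. Applying the classical Third Sylow Theorem for groups to $\G_{e_i}$ then yields $N_i\mid b_i$ and $N_i\equiv 1\bmod p_i$. Substituting into the displayed identity gives
\[
n_{D,P}=\Bigl(\prod_{i=1}^{\ell}N_i\Bigr)\binom{k}{D},
\]
with each $N_i$ satisfying the two asserted conditions, which is exactly the claim.

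I do not anticipate a genuine obstacle: once parts (i)–(iii) of the Second Sylow Theorem are available, this is essentially bookkeeping. The one point that deserves care is the identification underlying the displayed formula, namely that, once the coarse part $\cA_{d_i}$ is fixed, "the number of connected $p_i$-Sylow subgroupoids of $\G_i=\cA_{d_i}\times\G_e$" coincides with "the number of $p_i$-Sylow subgroups of $\G_e$", so that the group-theoretic Sylow III is legitimately being applied; and that the combinatorial factor $\binom{k}{D}$ is prime-independent, since by the preceding lemma it merely counts the connected components permutations of $\cS$ and is therefore untouched by the Sylow analysis of the isotropy groups.
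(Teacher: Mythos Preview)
Your proposal is correct and follows essentially the same route as the paper: start from the counting formula of the Second Sylow Theorem (iii), identify each factor as the number of $p_i$-Sylow subgroups of the isotropy group, and then invoke the classical Third Sylow Theorem for groups. The only cosmetic difference is that the paper routes this last step through Lemma~\ref{lemasylow3} (applied to each $\G_i$ with $\binom{d_i}{d_i}=1$), whereas you apply the group-theoretic Sylow III directly; since that lemma's proof is itself just an appeal to the classical result, the two arguments are the same in substance.
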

\begin{proof}
By the Second Sylow Theorem (iii) it follows that
\begin{align*}
    n_{D,P} = \binom{k}{D}\prod_{i=1}^\ell n_{d_i,p_i},
\end{align*}
where $n_{d_i,p_i}$ is the number of connected $p_i$-Sylow subgroupoids of $\G_i$. By Lemma \ref{lemasylow3},
\begin{align*}
    n_{d_i,p_i} = N_i \cdot \binom{d_i}{d_i} = N_i,
\end{align*}
where
\[\begin{cases}
    N_i \textrm{ divides } b_i, \\
    N_i \equiv 1 \mod p_i.
\end{cases}\]
\end{proof}

We will conclude this work by applying the results above in the groupoid $\G = \cA_7 \times \G_e$ where $|\G_e| = 105$.

\begin{exe}
Consider $k = 7$, $D = (1,3,3)$ and $P = (3,5,7)$. If $\G$ is a connected groupoid with $\G \simeq \cA_7 \times \G_e$ where $|\G_e| = 3 \cdot 5 \cdot 7 = 105$, we want to know the possible values of $n_{D,P}$. We already know that
\begin{align*}
n_{D,P} = \binom{7}{1,3,3}\prod_{i=1}^3 N_i,
\end{align*}
where $N_1$ divides 35, $N_2$ divides 21 and $N_3$ divides 15. So
\begin{align*}
    N_1 \in \{ 1, 5, 7, 35 \}, \\
    N_2 \in \{ 1, 3, 7, 21 \}, \\
    N_3 \in \{ 1, 3, 5, 15 \}.
\end{align*}

Hence
\begin{align*}
    n_{D,P} = N_1 \cdot N_2 \cdot N_3 \cdot 140,
\end{align*}
where $N_1 = 1$ or $N_1 = 7$, $N_2 = 1$ or $N_2=21$ and $N_3 = 1$ or $N_3=15$, because the congruences on Theorem \ref{teosylow3} hold. For more information, we would need to know more about the group $\G_e$. For example, there are two non-isomorphic groups of order 105. 

If $\G_e = \mathbb{Z}_{105}$, we have that $N_1 = N_2 = N_3 = 1$, since $\mathbb{Z}_{105}$ is abelian and all of its subgroups are normal. In this case, $\G$ is abelian in the sense of \cite{avila2020isomorphism}. So $n_{D,P} = 140$ and all the 140 $(D,P)$-Sylow subgroupoids of $\G$ are normal in $\G$.

Denoting by $H_p$ a $p$-subgroup of $\G_e$, for $p \in \{ 3, 5, 7 \}$, we have that $n_5 = 21$ and $n_7 = 15$ cannot happen at the same time. In fact, we would have more than 105 elements. So we have that $n_5 = 1$ or $n_7 = 1$. In either case, $H = H_5H_7$ is normal since its index is 3. Besides that, $|H| = 5 \times 7 = 35$. Since there is only one group of order 35, we have that $H \simeq \mathbb{Z}_{35}$.

Define a representation of some $H_3 \simeq \mathbb{Z}_3 \simeq \langle h : h^3 = e \rangle$ on $H$ by
\begin{align*}
    \varphi : H_3 \times \text{Aut}(H) & \to \text{Aut}(H) \\
    (h,\theta) & \mapsto h^{-1} \theta h.
\end{align*}

This will guarantee us that the semidirect product $G = H \rtimes_{\varphi} H_3 \simeq \mathbb{Z}_{35} \rtimes \mathbb{Z}_{3}$ is well-defined and $|G| = 105$, so $\G_e = G$.

Actually, we can prove that $n_5 = n_7 = 1$, so $N_2 = N_3 = 1$, and that $n_3 = 7$, so $N_1 = 7$. Then, we would have $n_{D,P} = 7 \cdot 140 = 980$ and these subgroupoids would not be normal in $\G$.

\end{exe}

\bibliographystyle{amsalpha}

\begin{thebibliography}{99}


\bibitem{alyamani2016fibrations} N. Alyamani; N. D. Gilbert; E. C. Miller, \textit{Fibrations of ordered groupoids and the factorization of ordered functors}, Appl. Categor. Struct., 24 (2016), 121-146.

\bibitem{avila2020isomorphism} J. Ávila; V. Marín; H. Pinedo, \textit{Isomorphism Theorems for Groupoids and Some Applications}, Int. J. Math. Sciences, 2020 (2020).

\bibitem{avila2020notions} J. Ávila; V. Marín, \textit{The Notions of Center, Commutator and Inner Isomorphism for Groupoids}, Ingeniería y Ciencia, 16 (31) (2020), 7-26.


\bibitem{bagio2019partial} D. Bagio; A. Paques; H. Pinedo, \textit{On partial skew groupoids rings}, arXiv preprint arXiv:1905.12608 (2019).

 \bibitem{brandt1927verallgemeinerung} H. Brandt, \textit {Über eine verallgemeinerung des gruppenbegriffes}, Mathematische Annalen 96 (1) (1927), 360-366.

\bibitem{ivan2002algebraic} G. Ivan, \textit{Algebraic constructions of Brandt groupoids}, Proc. Alg. Symposium, Babes-Bolyai University, Cluj (2002), 69-90.



\bibitem{paques2018galois} A. Paques; T. Tamusiunas, \textit{The Galois Correspondence Theorem for Groupoid Actions}, J. Algebra, 509 (2018), 105-123.




\end{thebibliography}
{
}

\end{document}